\crefname{equation}{}{}
\crefname{assumption}{Assumption}{Assumptions}
\newtheorem{theorem}{Theorem}[section]
\newtheorem{corollary}[theorem]{Corollary}
\newtheorem{lemma}[theorem]{Lemma}
\newtheorem{proposition}[theorem]{Proposition}
\theoremstyle{definition}
\theoremstyle{remark}
\newtheorem{remark}[theorem]{Remark}
\numberwithin{theorem}{section}
\numberwithin{equation}{section}
\numberwithin{figure}{section}
\def\supp{\operatorname{supp}}
\def\with{\,:\,}
\def\dx{\,\text{d}x}
\newcommand{\cAdd}{\color{black}}
\numberwithin{equation}{section}
\numberwithin{theorem}{section}
	\def\MR#1{}
\begin{document}
	
\title[Optimal Spectral Approximation in the Overlaps for GFEMs]{Optimal Spectral Approximation in the Overlaps for Generalized Finite Element Methods}
\author[C.~Alber, P.~Bastian, M.~Hauck, R.~Scheichl]{Christian Alber$^*$, Peter Bastian$^\dagger$, Moritz Hauck$^\ddagger$,  Robert Scheichl$^{*,\dagger}$}
\address{${}^{*}$ Institute for Mathematics, Heidelberg University, 69047 Heidelberg, Germany}
\email{\{c.alber, r.scheichl\}@uni-heidelberg.de}
\address{${}^\dagger$ Interdisciplinary Center for Scientific Computing (IWR), Heidelberg University, 69047 Heidelberg, Germany}
\email{peter.bastian@iwr.uni-heidelberg.de}
\address{${}^\ddagger$ Institute for Applied and Numerical Mathematics, Karlsruhe Institute of Technology, Englerstr.~2, 76131 Karlsruhe, Germany}
\email{moritz.hauck@kit.edu}

	
\begin{abstract}
In this paper, we study a generalized finite element method for solving second-order elliptic partial differential equations with rough coefficients. The method uses local approximation spaces computed by solving eigenvalue problems on rings around the boundary of local subdomains. Compared to the corresponding method that solves eigenvalue problems on the whole subdomains, the problem size and the bandwidth of the resulting system matrices are substantially reduced, resulting in faster spectral computations. We prove a nearly exponential a priori decay result for the local approximation errors of the proposed method, which implies the nearly exponential decay of the overall approximation error of the method. The proposed method can also be used as a preconditioner, and only a slight adaptation of our theory is necessary to prove the optimal convergence of the preconditioned iteration.  Numerical experiments are presented to support the effectiveness of the proposed method and to investigate its coefficient robustness. 
\end{abstract}

\keywords{generalized finite element method, multiscale method, 	Kolmogorov n-width, local spectral basis, spectral computations on rings, RAS preconditioner}

\subjclass{65F10, 65N15, 65N30, 65N55}

\maketitle

\renewcommand{\u}{\boldsymbol{u}}
\renewcommand{\v}{\boldsymbol{v}}
\newcommand{\f}{\boldsymbol{f}}
\newcommand{\p}{p}
\newcommand{\q}{q}

\newcommand{\uh}{\boldsymbol{u}_h}
\newcommand{\vh}{\boldsymbol{v}_h}
\newcommand{\vhi}{\boldsymbol{v}_{h,i}}
\newcommand{\ph}{p_h}
\newcommand{\qh}{q_h}

\newcommand{\uph}[1]{\u_{h,{#1}}^p}
\newcommand{\uhp}{\u_h^p}
\newcommand{\uphj}{\uph{j}}

\newcommand{\numax}{\nu_{\mathrm{max}}}
\newcommand{\numin}{\nu_{\mathrm{min}}}

\newcommand{\ue}{\u^e}
\newcommand{\pe}{\p^e}
\newcommand{\uhe}{\u_h^e}
\newcommand{\phe}{\p_h^e}

\renewcommand{\L}[1]{L^2({#1})}
\newcommand{\Winf}[1]{W^{1,\infty}(#1)}
\newcommand{\Lz}[1]{L_0^2(#1)}
\renewcommand{\H}[1]{H^1(#1)}
\newcommand{\Hz}[1]{H_0^1(#1)}
\newcommand{\Linf}[1]{L^{\infty}(#1)}
\newcommand{\aharmg}[1]{H_{a_{\gamma}}(#1)}
\newcommand{\aharm}[1]{H_{a}(#1)}
\newcommand{\aharmZ}[1]{H_{a,0}(#1)}
\newcommand{\hdi}[3]{\boldsymbol{H}_{#1}(\di^{#2},{#3})}
\newcommand{\hcu}[3]{\boldsymbol{H}_{#1}(\cur^{#2},{#3})}
\newcommand{\hcuo}[2]{\boldsymbol{H}_{#1}^1(\operatorname{curl},{#2})}
\newcommand{\hdivO}{\boldsymbol{H}(\di,\Omega)}
\newcommand{\hdivOZ}{\boldsymbol{H}(\di^0,\Omega)}
\newcommand{\hdivON}{\boldsymbol{H}_0(\di,\Omega)}
\newcommand{\hdivoi}{\boldsymbol{H}(\di,\omega_i^*)}
\newcommand{\hdivoiN}{\boldsymbol{H}^{\operatorname{div}}_{N}(\omega_i^*)}
\newcommand{\hdivoiNZ}{\boldsymbol{H}^{\operatorname{div},0}_{N}(\omega_i^*)}
\newcommand{\hdivoiNI}{\boldsymbol{H}^{\operatorname{div}}_{NI}(\omega_i^*)}
\newcommand{\hdivoiNIZ}{\boldsymbol{H}^{\operatorname{div},0}_{NI}(\omega_i^*)}

\newcommand{\linop}[2]{\mathcal{L}(#1,#2)}
\newcommand{\htrace}[1]{H^{\frac{1}{2}}(\partial #1)}

\newcommand{\mesh}{\mathbb{T}_h}
\newcommand{\bdm}[1]{\mathcal{BDM}_0(#1, \mesh)}
\newcommand{\pdisc}[2]{\mathcal{P}^{\mathrm{disc}}_{#2}(#1, \mesh)}
\newcommand{\pcont}[2]{\mathcal{P}_{#2}(#1, \mesh)}
\newcommand{\Vh}[1]{\boldsymbol{V}_h(#1)}
\newcommand{\Vhz}[1]{\boldsymbol{V}_{h,0}(#1)}
\newcommand{\aharmgh}[1]{H_{a_{\gamma},h}(#1)}
\newcommand{\interop}{I_h}
\newcommand{\partoph}[1]{P_{h,#1}}

\newcommand{\trace}{T}
\newcommand{\ext}{E}

\renewcommand{\a}[3]{a_{#1}(#2,#3)}
\newcommand{\ag}[3]{a_{\gamma,#1}(#2,#3)}
\newcommand{\agh}[3]{a_{\gamma,#1, h}(#2,#3)}
\renewcommand{\b}[3]{b_{#1}(#2,#3)}
\newcommand{\scal}[3]{(#1,#2)_{\L{#3}}}
\newcommand{\ao}{a_{\omega_i^*}}
\newcommand{\bo}{b_{\omega_i^*}}
\newcommand{\intOm}{\int_{\Omega}}

\newcommand{\ahi}{a_h^i}
\newcommand{\api}{a_p^i}
\newcommand{\aci}{a_c^i}
\newcommand{\ahb}{a_h^{\partial}}
\newcommand{\apb}{a_p^{\partial}}
\newcommand{\acb}{a_c^{\partial}}
\newcommand{\ah}{a_{h}}
\newcommand{\sumdg}[1]{\llbracket {#1} \rrbracket}
\newcommand{\sumdgw}[1]{\llbracket {#1} \rrbracket_w}
\newcommand{\jumpdg}[1]{\sumdg{{#1} \otimes \normal}}
\newcommand{\scaldg}[3]{{\langle {#1}, {#2} \rangle_{#3}}}
\newcommand{\ahosj}{a_{h,\omega_j^*}}
\newcommand{\ahosji}{a_{h,\omega_j^*}^i}
\newcommand{\ahosjb}{a_{h,\omega_j^*}^\partial}
\newcommand{\ahos}{a_{h,\omega^*}}
\newcommand{\ahosi}{a_{h,\omega^*}^i}
\newcommand{\aposi}{a_{p,\omega^*}^i}
\newcommand{\acosi}{a_{c,\omega^*}^i}
\newcommand{\ahosb}{a_{h,\omega^*}^\partial}

\newcommand{\facedg}{\mathbb{F}_h}
\newcommand{\facedgint}{\mathbb{F}_h^i}
\newcommand{\facedgbdry}{\mathbb{F}_h^\partial}
\newcommand{\dgstab}{\gamma_h^2}
\newcommand{\tmin}{T_{\mathrm{min}}}
\newcommand{\tmax}{T_{\mathrm{max}}}
\newcommand{\reconop}{Q_h}
\newcommand{\reconopvec}{Q_h^d}
\newcommand{\face}{F}

\newcommand{\lnorm}[2]{\|#1\|_{\L{#2}}}
\newcommand{\agnorm}[2]{\|#1\|_{a_{\gamma}, #2}}
\newcommand{\anorm}[2]{\|#1\|_{a, #2}}
\newcommand{\hdinorm}[2]{\|{#1}\|_{\hdi{}{}{{#2}}}}
\newcommand{\norm}[2]{\|{#1}\|_{#2}}
\newcommand{\seminorm}[2]{|{#1}|_{#2}}
\newcommand{\hnorm}[2]{\norm{#1}{H^1({#2})}}
\newcommand{\hsemnorm}[2]{|#1|_{H^1({#2})}}
\newcommand{\sipnorm}[2]{\norm{#1}{SIP,{#2}}}
\newcommand{\sipnormi}[2]{\sipnorm{#1}{#2,i}}
\newcommand{\jumpnorm}[2]{|#1|_{J, #2}}

\newcommand{\di}{\operatorname{div}}
\newcommand{\cur}{\operatorname{curl}}
\newcommand{\cu}{\nabla\times}
\newcommand{\cuScal}{\boldsymbol{\operatorname{curl}}}
\newcommand{\normal}{\boldsymbol{n}}

\newcommand{\inte}[3]{\int_{#1}{#2} \medspace d\boldsymbol{#3}}

\newcommand{\M}{M}
\newcommand{\Mg}{\M_{\gamma}}
\newcommand{\A}{A}
\newcommand{\Ag}{\A_{\gamma}}
\newcommand{\B}{B}
\renewcommand{\P}{P}
\newcommand{\Schur}{S}
\newcommand{\Schurg}{\Schur_{\gamma}}
\newcommand{\Aap}{\hat{A}}
\newcommand{\Schurap}{\hat{S}}
\newcommand{\Schurapg}{\Schurap_{\gamma}}
\newcommand{\Pap}{\hat{P}}
\newcommand{\W}{W}

\newcommand{\meta}[1]{\textcolor{blue}{#1}}
\newcommand{\ca}[1]{{\color{red} #1}}

\newcommand{\lorth}{P}
\newcommand{\orthProj}[1]{\mathbb{P}_{#1}}

\newcommand{\om}{\omega}
\newcommand{\os}{\omega^{\ast}}
\newcommand{\ommin}{\tilde \om}
\newcommand{\osmin}{\tilde \omega^*}
\newcommand{\ring}{R}
\newcommand{\ringos}{R^{\ast}}
\newcommand{\ringeta}{R^{\eta}}

\newcommand{\omi}{\omega_i}
\newcommand{\osi}{\omega^{\ast}_i}
\newcommand{\ommini}{\tilde \om_i}
\newcommand{\osmini}{\tilde \omega^*_i}
\newcommand{\ringi}{R_i}
\newcommand{\ringosi}{R^{\ast}_i}
\newcommand{\ringetai}{R^{\eta}_i}

\newcommand{\nwidthh}[1]{d_{h,#1}(\os{#1},\om{#1})}
\newcommand{\numDom}{M}
\newcommand{\nloc}[1]{n_{loc}}
\newcommand{\gfemSpace}[2]{S^{#2}_{\nloc{#1}}(\om{#1})}
\newcommand{\gfemSpaceInt}[2]{\hat{S}^{#2}_{\nloc{#1}}(\om{#1})}
\newcommand{\omcon}{\kappa}
\newcommand{\oscon}{\kappa^{\ast}}
\newcommand{\pu}{P}
\newcommand{\AharmEx}{L}

\newcommand{\specProj}{P_J}
\newcommand{\z}{\boldsymbol{z}}
\newcommand{\y}{\boldsymbol{y}}
\newcommand{\C}{\mathbb{C}}
\newcommand{\R}{\mathbb{R}}
\newcommand{\N}{\mathbb{N}}


\section{Introduction}

This paper considers the numerical solution of second-order elliptic partial differential equations (PDEs) with strongly heterogeneous and highly varying (non-periodic) coefficients. Such so-called multiscale problems arise, for example, when modeling fluid flow in porous media or simulating composite materials. The numerical treatment of multiscale problems with classical finite element methods (FEMs) suffers from suboptimal approximation rates and preasymptotic effects on meshes that do not resolve the microscopic details of the coefficients, cf.~\cite{Babuka1999}. Consequently, very fine meshes are required, leading to large and possibly ill-conditioned  systems of equations. This motivates the development of multiscale methods that achieve accurate approximations already at coarse scales. To this end, they incorporate physically important microscopic features into the coarse approximation~space.

The construction and analysis of multiscale methods has been an active field in the last decades. In the following, we distinguish between two classes of methods: First, methods that exploit structural properties of the coefficients, such as periodicity and scale separation, to construct the problem-adapted basis functions. Their computational cost typically differs from that of classical FEMs on the same mesh only by the cost of solving a fixed number of local problems.  This class includes the Heterogeneous Multiscale Method~\cite{EE03}, the Two-Scale Finite Element Method \cite{MaS02}, and the Multiscale Finite Element Method~\cite{HoW97}.
In contrast, methods of the second class provide accurate approximations  under minimal structural assumptions on the coefficients. This is achieved at the expense of a moderate computational overhead compared to classical FEMs. This overhead is manifested in an extended support of basis functions or an increased number of basis functions per mesh entity.  Prominent methods include the Generalized Multiscale Finite Element Method~\cite{EfeGH13,ChuEL18b}, the Multiscale Spectral Generalized Finite Element Method (MS-GFEM) \cite{BabL11, Ma22}, Adaptive Local Bases~\cite{GraGS12}, the \text{(Super-)} Localized Orthogonal Decomposition (LOD) method~\cite{MalP14,HenP13,HaPe21b,Freese2023}, or Gamblets~\cite{Owh17}; for a more comprehensive overview of multiscale methods see the recent review articles~\cite{AltHP21,Ma24}. 

The method presented in this paper is a variant of the MS-GFEM, which was originally introduced in \cite{BabL11} and then further developed in \cite{babuvska2014machine,babuvska2020multiscale,Ma22,ma2022error,Ma24}. Note that there are also variants of the method that compute close to optimal approximation spaces at lower cost, based on the work~\cite{Buhr2018}. These methods belong to the class of Generalized Finite Element Methods (GFEMs), which is an extension of the classical FEM methodology, cf.~\cite{babuvska1997partition,melenk1995generalized}. Given a partition of unity and corresponding overlapping subdomains, the GFEM employs possibly non-polynomial local approximation spaces on each subdomain. These local spaces are then ``glued'' together using the partition of unity, resulting in a global approximation space. The MS-GFEM employs optimal local approximation spaces, which are computed by solving local spectral problems in the space of locally operator-harmonic functions. Note that the local spectral problems are mutually independent and therefore can be solved in parallel. However, they still pose a significant computational challenge due to their complex saddle-point structure, where a Lagrange multiplier is used to impose the harmonicity constraint.

In this work, we aim to reduce the size of the local spectral problems and thereby reduce the computational cost of solving them. This is achieved by localizing the spectral computations to a ring around the boundary of the subdomains. The local optimal approximation space on the ring is then operator-harmonically extended to the whole subdomain. 
{\color{black}
We emphasize that the computational savings are not only a result of the reduced number of degrees of freedom on the ring, but also due to the better sparsity pattern of the matrices associated with the problem on the ring. Specifically, the sparsity pattern resembles that of a spatially lower-dimensional problem, allowing for a fast factorization with less fill-in. This effect is clearly visible in our numerical experiments.}
Note that related localization ideas have been used in the context of spectral coarse spaces in domain decomposition methods; see, e.g., \cite{Dohrmann2015,Heinlein2019}, where the resulting methods were referred to as economic versions. We provide a rigorous theoretical justification of the localization and derive nearly exponential approximation results at the local and global levels. 
Such an a priori analysis is new and, for example, the analysis in \cite{Dohrmann2015,Heinlein2019} was restricted to a posteriori results. 
We emphasize that the proven decay results are qualitatively similar to those of the MS-GFEM, where spectral problems are solved on the entire subdomains. However, due to the limited information available when considering only the ring, it is expected that more eigenfunctions are required for each subdomain.
{\color{black}
	 This effect, which can also be observed numerically, is theoretically reflected in larger constants in the decay estimates. Nevertheless, the computational advantage of using rings, due to a reduced number of degrees of freedom and an improved sparsity pattern, remains.}
For the sake of brevity, only the infinite dimensional setting will be discussed,
even though the speedup is mainly of interest in the discrete setting. 
Our methodology and analysis generalize directly to the discrete setting and we 
will point out the minor changes required.

Building on the observation in \cite{Strehlow2024} that the MS-GFEM can be used as a two-level Restricted Additive Schwarz (RAS) \cite{Cai1999} preconditioner within an iterative method, we propose a similar approach for our method. 
Note that, more generally, spectral coarse spaces are a popular tool for achieving coefficient-robust convergence properties, and many different constructions have been developed. We refer for example to the Finite Element Tearing and Interconnecting (FETI) and Neumann-Neumann methods \cite{Bjrstad2001,Spillane2013}, the Balancing Domain Decomposition by Constraints and FETI Dual-Primal methods \cite{Mandel2007, Mandel2012,Klawonn2016,Pechstein2017,Kim2017}, as well as the Overlapping Schwarz methods \cite{galvis2010domain-reduced,Nataf2011,efendiev2012robust,spillane2014abstract,gander2015analysis,Heinlein2019,heinlein2022fully,bastian2022multilevel,al2023efficient}.
The spectral coarse space of the above mentioned RAS-type method is precisely the approximation space of the multiscale method.
Eventually, when the problem size becomes larger or an improved accuracy is required, the coarse problem of the proposed method becomes too large to be solved with a direct solver. In this case, an iterative version of the proposed method is particularly attractive because by using multiple iterations, one can achieve high accuracy with a significantly smaller coarse space. The theoretical framework used to analyze the proposed method as a multiscale method directly implies the convergence of the preconditioned iteration. Our numerical experiments show a quantitatively similar convergence behavior for the preconditioned iteration based on our proposed method on rings and the original MS-GFEM. However, 
{ since the eigenproblem in the ring-based method does not take into account information about the coefficient outside the ring}, it is possible to design high-contrast channels that require a larger number of modes to be selected compared to the MS-GFEM.

This paper is organized as follows. First, we introduce the model problem in \cref{sec:modelproblem}, followed by an abstract presentation of the GFEM in \cref{sec:gfem}. In \cref{sec:construction} we present the construction of the proposed method, specifying its local particular functions and local approximation spaces. An a priori convergence analysis of the proposed method is given in \cref{sec:convergence}. In \cref{sec:preconditioner} we use the proposed method as a preconditioner and show the convergence of the preconditioned iteration. Finally, we conclude the paper with a series of numerical experiments in \cref{sec:numericalexperiments}.

\section{Model problem}
\label{sec:modelproblem}
In this section, we introduce a prototypical second-order linear elliptic PDE, which will serve as the model problem for this paper. Note that, as demonstrated in \cite{Ma24}, the approach of the presented method and its analysis can be easily extended to PDEs beyond this model problem. Given a polygonal Lipschitz domain $\Omega \subset \mathbb R^d$, $d \in \{2,3\}$, the model problem seeks a solution $u \colon \Omega \to \mathbb R$ satisfying
\begin{equation}
	\label{eq:PDEstrong}
	\left\{
	\begin{aligned}
		- \operatorname{div} (\bm A \nabla u) &= f && \quad \text{in } \Omega,\\
		u &= g && \quad \text{on } \partial \Omega.
	\end{aligned}\right.
\end{equation}
The matrix-valued coefficient $\bm A\in L^\infty(\Omega,\mathbb{R}^{d\times d})$ is assumed to be symmetric and positive definite almost everywhere, i.e., there exist constants ${0<\alpha_\mathrm{min}\leq \alpha_\mathrm{max}< \infty}$ such that for almost all $\bm x \in \Omega$ and for all $\bm \xi\in \mathbb{R}^d$ it holds that
\begin{equation}\label{eq:propA}
	\alpha_\mathrm{min}|\bm \xi|^2 \leq (\bm A(\bm x)\bm \xi)\cdot \bm \xi \leq \alpha_\mathrm{max} |\bm \xi|^2,
\end{equation}
where $|\cdot|$ denotes the Euclidean norm of a $d$-dimensional vector. For the source term and Dirichlet data, we assume that $f \in L^2(\Omega)$ and $g \in H^{1/2}(\partial \Omega)$, respectively.

The weak formulation of~\cref{eq:PDEstrong} is based on the Sobolev space $H^1(\Omega)$ and its affine subspace of functions with  Dirichlet datum $g \in H^{1/2}(\partial \Omega)$  defined as 
\begin{equation}
H^1_{g}(\Omega) \coloneqq \{v \in H^1(\Omega) \with v|_{\partial \Omega} = g\},
\end{equation} 
where the latter restriction to the boundary has to be interpreted in the sense of traces.
Note that when setting $g$ to zero, we retrieve the subspace $H^1_0(\Omega)$ satisfying homogeneous Dirichlet boundary conditions. 
The bilinear and linear forms associated with the problem are for any $u \in H^1(\Omega)$ and $v \in H^1_0(\Omega)$ defined as
\begin{equation}
	\label{eq:defaandF}
	a(u, v) \coloneqq
	\int_\Omega (\bm A\nabla u)\cdot\nabla v\dx,\qquad 	F(v) \coloneqq \int_\Omega fv\dx.
\end{equation}

Given a right-hand side $f$ and Dirichlet data $g$, the weak formulation of problem~\cref{eq:PDEstrong} seeks a function $u \in H^1_g(\Omega)$ such that 
\begin{equation}
	\label{eq:PDEweak}
	a(u, v) = F(v) \qquad \forall v \in H^1_0(\Omega).  
\end{equation} 
By the Poincaré--Friedrichs inequality and the uniform coefficient bound \cref{eq:propA}, we obtain that $a$ is an inner product on the space~$H^1_0(\Omega)$ with induced norm $\|\cdot\|^2_a \coloneqq a(\cdot,\cdot)$ equivalent to the full $H^1(\Omega)$-norm. Therefore, using the fact that $F$ is a bounded linear functional, the Riesz representation theorem can be applied to prove the well-posedness of \cref{eq:PDEweak}.

For later use, we introduce restricted versions of~$a$ and~$F$ to a given subdomain $S \subset \Omega$, denoted by $a_S$ and $F_S$, respectively. Such restricted versions can be obtained by restricting the integrals in definition~\cref{eq:defaandF} to the subdomain~$S$. The corresponding restricted energy norm is denoted by $\|\cdot\|_{a,S}^2\coloneqq a_S(\cdot,\cdot)$.  

\section{Generalized finite element method}\label{sec:gfem}

The method proposed in this paper belongs to the class of Generalized Finite Element Methods (GFEMs). The GFEM is an extension of the classical FEM that allows the use of ansatz spaces adapted to the problem at hand, potentially leading to significantly more accurate approximations than those obtained using polynomial-based ansatz spaces. In the following, we will outline the general construction of the GFEM and provide a global approximation result for its discrete solution.
Let us consider an overlapping domain decomposition \(\{\omega_i\}_{i=1}^M\) of the computational domain \(\Omega\), composed of \(M \in \mathbb{N}\) open subdomains (\(\mathbb{N}\) denoting the set of positive natural numbers). We assume that \(\cup_{i=1}^{M} \omega_i = \Omega\), and that each point \(\bm x \in \Omega\) belongs to at most \(\kappa \in \mathbb N\) subdomains. Furthermore, we consider a partition of unity \(\{\chi_i\}_{i=1}^M\) subordinate to this decomposition, and assume that the partition of unity functions satisfy the following classical properties:
\begin{equation}
	\label{eq:propPU}
	\begin{aligned}
			0\leq \chi_{i}\leq 1,\qquad \operatorname{supp}(\chi_i) \subset \overline{\omega_i},\qquad  \sum_{i=1}^{M}\chi_{i} \equiv 1,\\
		\chi_{i}\in W^{1,\infty}(\Omega),\qquad  \max_{\bm x \in \Omega}\,|\nabla \chi_i|\leq \frac{C_\chi}{{\cAdd \delta_i}},
	\end{aligned}
\end{equation}
{\cAdd where $C_\chi>0 $ is a constant of order one, and $\delta_i$ denotes the minimal width of the overlap region $\{\bm x \in \omi : \exists j\neq i \text{ s.t. } \bm x \in \om_j\}$ of $\omi$.}

The construction of the GFEM begins by identifying suitable local approximation spaces \(S_{n_i}(\omega_i) \subset H^1(\omega_i)\) of dimension \(n_i \in \mathbb{N}\), which are assumed to satisfy homogeneous boundary conditions on \(\partial \omega_i \cap \partial \Omega\). To construct a global approximation space, the GFEM then ``glues'' these local spaces together using the partition of unity. More precisely, denoting by \(\bm{n} = (n_1, \dots, n_M) \in \mathbb{N}^M\) the vector containing the \(n_i\)'s, we define the global approximation space of the GFEM as
\begin{equation}
	\label{eq:Sn}
	S_{\bm n}(\Omega) \coloneqq \left\{ \sum_{i=1}^{M} \chi_i \phi_i : \, \phi_i \in S_{n_i}(\omega_i) \right\} \subset H_0^1(\Omega).
\end{equation}
As will be discussed later (cf.~\cref{thm:1-0}), the approximation error of this global space can be bounded by the maximum of the local approximation errors.

To handle inhomogeneous boundary data or source terms, the GFEM uses so-called particular functions. A global particular function $u^p$ can be constructed by gluing together local particular functions $u_i^p \in H^1(\omi)$ corresponding to the subdomains. For subdomains at the boundary, the $u_i^p$ are constructed such that $u_i^p = g$ on $\partial \omega_i \cap \partial \Omega$, while for interior subdomains homogeneous Dirichlet boundary conditions are imposed. 
The global particular function is then given by
\begin{equation*}
	u^p \coloneqq  \sum_{i=1}^{M} \chi_i u_i^p,
\end{equation*}
which satisfies $u^p \in H^1_g(\Omega)$ by the partition of unity property, cf.~\cref{eq:propPU}.

The GFEM solution is then defined as the Galerkin approximation in the affine space $u^p + S_{\bm n}(\Omega)$, i.e., we seek $u^G = u^p + u^s$ with $u^s \in S_{\bm n}(\Omega)$ such that 
\begin{equation}
	\label{eq:weakformulationgfem}
	a(u^s,v) = F(v) - a(u^p,v)\qquad \forall v \in S_{\bm n}(\Omega). 
\end{equation}

The following theorem provides a bound on the approximation error of the GFEM in terms of its local approximation errors. Such results are classical, and the specific version presented below, which includes the partition of unity functions in the local approximation estimates, can be found in~\cite[Thm.~2.1]{Ma22}.
\begin{theorem}[Error estimate of GFEM]\label{thm:1-0}
Given any function $\Psi \coloneqq u^{p} + \sum_{i=1}^{M}\chi_{i}\phi_{i}$, where $u^p \in H^1_g(\Omega)$ and the $ \phi_i \in S_{n_i}(\omi)$ satisfy
		\begin{equation}\label{eq:1-3-1}
		\big\Vert \chi_{i}(u-u^{p}_{i}-\phi_{i})\big\Vert_{a,\,\omega_{i}}\leq \varepsilon_{i} {\cAdd \big\Vert u \big\Vert_{a,\,\osi}},
	\end{equation}
	for some $\varepsilon_{i}>0$,  the GFEM error can be bounded as
	\begin{equation}
		\|u-u^G\|_a \leq \|u-\Psi\|_a
		\leq  \cAdd{\sqrt{\kappa\kappa^*}\left(\max_{i=1,\ldots,M} \varepsilon_i\right)\big\Vert u \big\Vert_{a}},
	\end{equation}
	{\cAdd where $\kappa^* \in \mathbb N$ is the maximum number of subdomains $\osi$ containing a point $\bm x \in \Omega$.}
	In particular, the GFEM approximation is optimal in the space $u^p + S_{\bm n}(\Omega)$.
\end{theorem}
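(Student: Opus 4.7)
The plan is to prove the two inequalities separately. The first, $\|u-u^G\|_a\le\|u-\Psi\|_a$, is the usual Céa-type / Galerkin optimality estimate. Note that by the partition-of-unity property one has
\begin{equation*}
\Psi = u^{p}+\sum_{i=1}^{M}\chi_i\phi_i = \Big(\sum_{i=1}^{M}\chi_i u^p_i\Big)+\sum_{i=1}^{M}\chi_i\phi_i,
\end{equation*}
so that $\Psi$ lies in the affine space $u^{p}+S_{\bm n}(\Omega)$. Since $u^{G}$ is the $a$-orthogonal projection of $u$ onto this affine space (by the Galerkin equation \eqref{eq:weakformulationgfem} and the fact that $u$ itself satisfies \eqref{eq:PDEweak}), the best-approximation property gives the first bound immediately.

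For the second inequality, the key observation is that the partition-of-unity identity $\sum_i\chi_i\equiv 1$ allows one to rewrite
\begin{equation*}
u-\Psi = \sum_{i=1}^{M}\chi_i\bigl(u-u^{p}_i-\phi_i\bigr),
\end{equation*}
so that $u-\Psi$ is expressed as a sum of functions each supported in $\overline{\omega_i}$. I would then invoke the finite overlap property: since any point of $\Omega$ lies in at most $\kappa$ subdomains $\omega_i$, a Cauchy--Schwarz argument on the sum, integrated against $\bm A\nabla(\cdot)\cdot\nabla(\cdot)$, yields
\begin{equation*}
\|u-\Psi\|_a^2\;\le\;\kappa\sum_{i=1}^{M}\bigl\|\chi_i(u-u^{p}_i-\phi_i)\bigr\|_{a,\,\omega_i}^2.
\end{equation*}

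Applying the hypothesis \eqref{eq:1-3-1} bounds the right-hand side by $\kappa\sum_i\varepsilon_i^2\|u\|_{a,\omega_i^{\ast}}^2\le\kappa(\max_i\varepsilon_i)^2\sum_i\|u\|_{a,\omega_i^{\ast}}^2$. To finish, I would use the analogous finite-overlap property of the enlarged subdomains $\{\omega_i^{\ast}\}$: since any point lies in at most $\kappa^{\ast}$ of them,
\begin{equation*}
\sum_{i=1}^{M}\|u\|_{a,\omega_i^{\ast}}^2
=\int_{\Omega}\bm A\nabla u\cdot\nabla u\,\Big(\sum_{i=1}^{M}\mathbf{1}_{\omega_i^{\ast}}\Big)\dx
\le\kappa^{\ast}\|u\|_a^2.
\end{equation*}
Taking square roots gives the stated factor $\sqrt{\kappa\kappa^{\ast}}$.

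The only slightly delicate step is the finite-overlap Cauchy--Schwarz bound converting the norm of a sum into a sum of norms, because the terms $\chi_i(u-u^{p}_i-\phi_i)$ are not $a$-orthogonal and one must be careful that the constant is exactly $\kappa$ (and not $\kappa^2$); this follows by writing the squared norm as a double sum, noting that the $(i,j)$-term vanishes unless $\overline{\omega_i}\cap\overline{\omega_j}\neq\emptyset$, and applying Cauchy--Schwarz pointwise in the integrand. The optimality statement at the end of the theorem is then a direct restatement of the first inequality together with the fact that $\Psi$ ranges over $u^{p}+S_{\bm n}(\Omega)$.
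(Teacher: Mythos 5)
Your proof is correct and follows exactly the classical partition-of-unity argument that the paper relies on (it does not reprove the result itself but cites \cite[Thm.~2.1]{Ma22}): Galerkin optimality for the first inequality, then the decomposition $u-\Psi=\sum_i\chi_i(u-u_i^p-\phi_i)$ with the pointwise Cauchy--Schwarz/finite-overlap bounds yielding the factors $\kappa$ and $\kappa^*$. No gaps; your remark on obtaining the constant $\kappa$ rather than $\kappa^2$ via the pointwise argument is exactly the right care to take.
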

 
\section{Construction of the proposed method}\label{sec:construction}

\begin{figure}\label{fig:1-1}
	\centering
	\includegraphics[width=.4\linewidth]{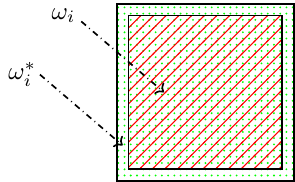}\hspace{.5cm}
	\includegraphics[width=.4\linewidth]{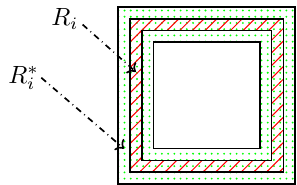}\hfill
	
	\caption{Illustration of the subdomains $\omi$ and $\osi$ (left) and  of the rings $\ringi$ and $\ringosi$ (right) in the case of an interior subdomain.}
	\label{fig:patchandring}
\end{figure}

This section is devoted to the precise construction of the proposed method, i.e., its choice of local particular functions and local approximation spaces. To simplify the presentation, we will first consider interior subdomains. The treatment of boundary subdomains will be discussed later in \cref{sec:boundarysubdomains}.

\subsection{Original MS-GFEM}
Since the proposed method is an enhancement of the methods in~\cite{BabL11,Ma22}, we first briefly recall the construction of the MS-GFEM. The MS-GFEM constructs local particular functions by solving a problem on an oversampling domain $\omega_i^*$ with \(\omega_i \subset \omega_i^* \subset \Omega\); see \cref{fig:patchandring} (left) for an illustration. Specifically, the problem seeks \(\psi_i \in H^1_0(\omega_i^*)\) such that
\begin{equation}
	\label{eq:locparticularfun}
	a_{\omega_i^*}(\psi_i, v) = F_{\omega_i^*}(v)\qquad \forall v \in H^1_0(\omega_i^*).
\end{equation}
The local particular function is chosen as $u_i^p \coloneqq \psi_i|_{\omega_i}$.

To construct the local approximation space of the MS-GFEM, one uses the observation that the difference \(u|_{\osi} - \psi_i\) is an operator-harmonic function, i.e., it is an element of the space \(H_a(\omega_i^*)\), defined as
\begin{equation}
	\label{eq:harmonicfunctions}
	H_a(\omega_i^*) \coloneqq \left\{ v \in H^1(\omega_i^*) \with  a_{\omega_i^*}(v, \varphi) = 0 \;\; \forall \varphi \in H^1_0(\omega_i^*) \right\}.
\end{equation}
The local approximation space is then defined as $S_{n_i}(\omi) \coloneqq \operatorname{span}\{u_1|_{\omi},\dots,u_{n_i}|_{\omi}\}$, where 
$(u_k,\lambda_k) \in H_{a}(\omega_i^*) \times \mathbb R$ are the first $n_i$ eigenpairs of   
\begin{equation}
	\label{eq:evpharmonicfullincludingconstant}
	a_{\omega_i^*}(u_k, \varphi) = \lambda_k\, a_{\omega_i}(\chi_i u_k, \chi_i \varphi)\qquad \forall\varphi \in H_{a}(\omega_i^*). 
\end{equation}
On the space $H_a(\omega_i^*)$,
the restricted energy norm $\|\cdot\|_{a,\osi}$ is a seminorm, and to obtain a subspace on which it is also a norm, we introduce the bounded linear functional  
\begin{equation}
	\label{eq:defMi}
	\mathcal M_i\colon H^1(\osi)\to \mathbb R,\quad v \mapsto  \bigg(\int_{\omega_i} A \nabla \chi_i \cdot \nabla \chi_i \dx\bigg)^{-1}\int_{\omega_i} A \nabla (\chi_i v) \cdot \nabla \chi_i \dx.
\end{equation}
In fact, defining
\begin{equation}
	\label{eq:harmonicfunctions0}
	H_{a,0}(\omega_i^*) \coloneqq \left\{ v \in H_a(\omega_i^*) \with  \mathcal M_iv = 0 \right\}
\end{equation}
 gives a Hilbert space when equipped with the inner product~$a_{\omi}(\cdot,\cdot)$. Note that it holds $u|_{\osi} - \psi_i-\mathcal M_i (u|_{\osi} - \psi_i) \in H_{a,0}(\osi)$ by the definition of $\mathcal M_i$.
The construction of the local approximation space of the MS-GFEM is then based on the observation that, after multiplication with the partition of unity function $\chi_i$, the latter function is contained in the range of the compact restriction operator
\begin{equation}
	\label{eq:defP}
	P_i \colon H_{a,0}(\osi) \to H^1_0(\omega_i), \qquad v \mapsto (\chi_i v)|_{\omega_i}.
\end{equation}
The compactness of this operator can be proved using a Caccioppoli-type inequality and the compactness of the embedding \( H^1(\omega^*) \hookrightarrow L^2(\omega^*) \); see, e.g., \cite[Lem.~3.1]{Ma22}. The Kolmogorov \(n\)-width  of the operator $P_i$, which is a measure for the approximability of the range using finite dimensional subspaces, is then defined as
\begin{equation}
	\label{eq:nwidthP}
	d_{n}(\omega_i, \omega_i^*) \coloneqq \inf_{Q_i(n) \subset H_0^1(\omega_i)} \sup_{u \in H_{a, 0}(\omega_i^*)} \inf_{v \in Q_i(n)} \frac{\|P u - v\|_{a, \omega_i}}{\|u\|_{a, \omega_i^*}},
\end{equation}
where the infimum is taken over subspaces \( Q_i(n) \subset H^1_0(\omega_i) \) of dimension $n$. The optimal subspace can be characterized with the help of an eigenproblem involving the operator $P_i^*P_i$ with $P_i^*$ denoting the adjoint of the operator $P_i$. It seeks eigenpairs  
$(v_k,\lambda_k) \in H_{a,0}(\omega_i^*) \times \mathbb R$ such that  
\begin{equation}
	\label{eq:evpharmonicfull}
	a_{\omega_i^*}(v_k, \varphi) = \lambda_k\, a_{\omega_i}(\chi_i v_k, \chi_i \varphi)\qquad \forall\varphi \in H_{a,0}(\omega_i^*). 
\end{equation}
The optimal subspace of dimension $n$ is then given by
\begin{equation}
	\label{eq:hatQ}
	\hat Q_i(n) \coloneqq \operatorname{span} \{\chi_i v_1, \dots, \chi_i v_{n} \},
\end{equation}
and the $(n+1)$-th eigenvalue characterizes the corresponding optimal \(n\)-width as
\begin{equation*}
	d_{n}(\omega_i, \omega_i^*) = \lambda_{n+1}^{-1/2}.
\end{equation*}
It can be shown that the $n$-width decays nearly exponentially in $n$, cf.~\cite{Ma24}. 
Due to the definition of $\mathcal{M}_i$, the eigenpairs of \cref{eq:evpharmonicfullincludingconstant} and \cref{eq:evpharmonicfull} are identical,  
except for the former including the zero eigenvalue corresponding to the constant eigenfunction, cf. \cite[Thm. 3.4]{Ma22}, and thus
$S_{n_i}(\omi) = \mathbb R \oplus \operatorname{span}\{v_1|_{\omi},\dots,v_{n_i-1}|_{\omi}\}$.
\subsection{MS-GFEM with eigenproblems defined on rings}
The proposed method differs from the MS-GFEM in the construction of the local approximation space. More precisely, the goal of the proposed method is to localize  eigenproblem \cref{eq:evpharmonicfull} to a ring. To formulate the localized eigenproblem, we again introduce two domains, namely a ring $\ringi \subset \omi$ chosen such that it contains the region where the subdomain~$\omega_i$ overlaps with its neighboring subdomains, and an oversampling ring $\ringosi$ with $\ringi \subset \ringosi \subset \osi$; see \cref{fig:patchandring} (right) for an illustration. 

Let $\ommini \subset \omi$ be a Lipschitz domain such that $\chi_i|_{\ommini} = 1$, $\ommini \cup \ringi = \omi$, and $\partial\ommini$ is 
contained in the interior of $\ringi$. {\cAdd We assume that}  
$\mathrm{dist}(\partial\ommini,\partial\ringi) \gtrsim \delta_i$. Then, there exists a cut-off function 
$\eta_i \in W^{1,\infty}(\osi, [0,1])$ that is $1$ on $\omi\setminus\ringi$ and $0$ on $\osi\setminus\ommini$. 
We define $\chi_i^R:\osi \to [0,1]$ such that $\chi_i^R(\bm x) = \chi_i(\bm x) - \eta_i(\bm x)$.
Then, there exists a constant~$C_{\chi^R}>0$ such that 
\begin{equation}
	\label{eq:propPUring}
	\max_{\bm x \in \osi}\,|\nabla \chi_i^R| \leq \frac{C_{\chi^R}}{{\cAdd \delta_i}}.
\end{equation}
The introduction of $\eta_i$ is inspired by the zero-extension 
used in the eigenproblems in \cite{Heinlein2019} and its purpose will become clear in \cref{thm:locapproxerr}.
\begin{figure}[h]
	\includegraphics{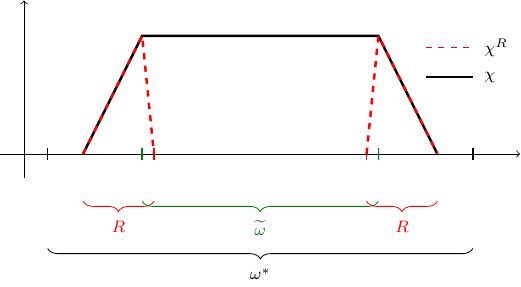}
	\caption{One dimensional illustration of the cut-off function~$\chi_i^R$ (left) and the corresponding ring $\ringi$ (right). }
	\label{fig:one_dimensional_illustration}
\end{figure}
	To build the local approximation space we consider the first $n_i$ eigenpairs
$(u_k,\lambda_k) \in H_{a}(R_i^*) \times \mathbb R$ of   
\begin{equation}
	\label{eq:evpharmonicringincludingconstant}
	a_{R_i^*}(u_k, \varphi) = \lambda_k\, a_{R_i}(\chi_i^R u_k, \chi_i^R \varphi)\qquad \forall\varphi \in H_{a}(R_i^*). 
\end{equation}
To obtain functions defined on the full oversampling domain $\osi$, we perform an operator-harmonic extension of the eigenfunctions. This extension is performed by taking the Dirichlet data on $\partial \ommini$. 
More precisely, the operator-harmonic extension of $u_k$, denoted by $u_k^\mathrm{ext}$, is defined on the subdomain $\omega_i^*$ as 
\begin{align}
	\label{eq:operatorharmonicextension}
	u_k^{\mathrm{ext}} \coloneqq \begin{cases}
		u_k&\quad \text{in } \ringosi \setminus \ommini,\\
		\tilde{u}_k &\quad \text{in } \ommini,
	\end{cases}
\end{align}
 where $\tilde{u}_k \in H^1(\ommini)$ satisfies the boundary condition $\tilde{u}_k = u_k$ on $\partial \ommini$ and 
 \begin{equation}
 	\label{eq:extensionring}
 	a_{\ommini}(\tilde{u}_k,\varphi) = 0\qquad \forall \varphi \in H^1_0(\ommini).
 \end{equation}
The operator-harmonically extended eigenfunctions are then used to construct the local approximation spaces of the proposed method. More specifically, the proposed method uses
\begin{align}
	\label{eq:locpartfunandlocappspace}
	u_i^p \coloneqq \psi_i|_{\omi},\qquad S_{n_i}(\omi) \coloneqq  \operatorname{span}\{u_1^\mathrm{ext}|_{\omi},\dots, u_{n_i}^\mathrm{ext}|_{\omi}\}.
\end{align}
For the analysis, we
consider another restriction operator
\begin{equation}
	\label{eq:compactRestrictionRing}
	P^R_i\colon  \aharmZ{\ringosi} \to H^1_{0}(\ringi), \qquad v \mapsto \chi^R_i v,
\end{equation}
whose domain is the space $H_{a,0}(\ringosi)$ of operator-harmonic functions on~$\ringosi$ for which  the functional 
\begin{equation}
	\label{eq:defMiR}
	\mathcal M_i^R\colon H^1(\osi)\to \mathbb R,\quad v \mapsto  \bigg(\int_{\omega_i} A \nabla \chi^R_i \cdot \nabla \chi^R_i \dx\bigg)^{-1}\int_{\omega_i} A \nabla (\chi_i^R v) \cdot \nabla \chi_i^R \dx
\end{equation}
vanishes.
Note that we have $v|_{R_i^*}\in H_{a,0}(R_i^*)$ for all $v\in H_{a}(\omega_i^*)$ with $\mathcal{M}_i^R(v) = 0$.
Since $\chi^R_i\in W^{1,\infty}(\osi)$, the compactness of $\P^R_i$ can be proved using the same arguments as in the corresponding proof for $P_i$.
Its Kolmogorov $n$-width $d_{n}(\ringi,\ringosi)$, defined analogously to \cref{eq:nwidthP}, is characterized by an eigenvalue problem similar to \cref{eq:evpharmonicfull} but posed on the ring~$\ringosi$. Specifically, it seeks eigenpairs $(v_k,\lambda_k) \in H_{a,0}(\ringosi)\times \mathbb R$ such that 
\begin{equation}\label{eq:evpharmonicring}
	a_{\ringosi}(v_k, \varphi) = \lambda_k\,a_{\ringi}(\chi^R_{i} v_k, \chi^R_{i} \varphi)\qquad \forall\varphi\in H_{a,0}(\ringosi). 
\end{equation}
	The operator-harmonic extension of $v_k$, denoted by $v_k^\mathrm{ext}$, is defined on the subdomain $\omega_i^*$ as in \cref{eq:operatorharmonicextension}.
Due to the definition of $\mathcal{M}_i^R$, the eigenpairs of \cref{eq:evpharmonicringincludingconstant} and \cref{eq:evpharmonicring} are identical, 
except for the former including the zero eigenvalue corresponding to the constant eigenfunction. Moreover
\begin{align}
	\label{eq:locpartfunandlocappspace}
	S_{n_i}(\omi) =  \mathbb R \oplus \operatorname{span}\{v_1^\mathrm{ext}|_{\omi},\dots, v_{n_i-1}^\mathrm{ext}|_{\omi}\}.
\end{align}

\subsection{Boundary subdomains}\label{sec:boundarysubdomains}
Until now, we have only considered interior subdomains. For boundary subdomains, the above choices of the local particular function and local approximation space must be adapted. Specifically, for the local particular function, we now choose $u_i^p \coloneqq (\psi_i+\psi_i^b)|_{\omi}$, where $\psi_i$ is given by \cref{eq:locparticularfun} and  $\psi_i^b \in H^1(\osi)$ is the unique function with $\psi_i^b = g$ on $\partial \osi \cap \partial \Omega$ satisfying
\begin{equation*}
	a_{\osi}(\psi_i^b,w) = 0\qquad \forall w \in \{v \in H^1(\osi) \with v|_{\partial \osi \cap \partial \Omega} = 0\}.
\end{equation*}
Note that, by construction, the particular function satisfies the Dirichlet boundary conditions on $\partial \osi\cap \partial \Omega$. 

Also the construction of the local approximation spaces must be adapted to the presence of boundary conditions. This can be done by considering the subspace of~$H_a(\ringosi)$ satisfying homogeneous Dirichlet boundary conditions on $\partial \ringosi \cap \partial \Omega$, i.e., 
\begin{equation}
	\label{eq:harmonic0boundary}
	H_{a,0}^b(\ringosi) \coloneqq \{v \in H_a(\ringosi) \with v|_{\partial \ringosi \cap \partial \Omega} = 0\}
\end{equation}
and solving eigenproblem~\cref{eq:evpharmonicring} in this space. 
The Dirichlet condition on $\partial\ringosi\cap\partial\Omega$ replaces the condition $\mathcal M_i v = 0$ in \cref{eq:harmonicfunctions0}.
The resulting eigenfunctions are then extended as in~\cref{eq:operatorharmonicextension}, and the local approximation space 
is defined as in~\cref{eq:locpartfunandlocappspace}, without adding $\R$. In summary, the following choices are made for 
boundary subdomains:
\begin{equation}
	\label{eq:locpartfunandlocappspaceboundary}
	u_i^p \coloneqq (\psi_i+\psi_i^b)|_{\omi},\qquad S_{n_i}(\omi) \coloneqq  \operatorname{span}\{v_1^\mathrm{ext}|_{\omi},\dots, v_{n_i}^\mathrm{ext}|_{\omi}\}.
\end{equation}

\section{Convergence theory}\label{sec:convergence}

In this section, we develop a convergence theory for the proposed method. The error analysis is divided into two parts. First, we study the approximation properties of the local approximation space computed only on the ring, and second, we combine the results to obtain an a priori error estimate of the proposed method. 

\subsection{Local approximation error}
Let us first examine the approximation properties of the $n$-dimensional subspace defined by
\begin{equation}
	\label{eq:qn}
	Q_i(n) \coloneqq \operatorname{span}\{\chi_i v_1^\mathrm{ext}, \dots, \chi_i v_{n}^\mathrm{ext}\}.
\end{equation}
when approximating the range of the restriction operator \(P_i\). Recall that the functions $v_i^\mathrm{ext}$ are constructed by solving eigenproblem~\cref{eq:evpharmonicring} on the ring $\ringosi$ and extending the resulting eigenfunctions as described in \cref{eq:operatorharmonicextension}. The following theorem bounds the corresponding approximation error  in terms of the \(n\)-width 
\begin{equation}
	\label{eq:nwidthP_ring}
	d_{n}(\ringi, \ringosi) \coloneqq \inf_{Q^R_i(n) \subset H_0^1(\ringi)} \sup_{u \in H_{a, 0}(\ringosi)} \inf_{v \in Q^R_i(n)} \frac{\|P^R u - v\|_{a, \ringi}}{\|u\|_{a, \ringosi}}.
\end{equation}

\begin{theorem}[Local approximation error bound]
	\label{thm:locapproxerr}
	The $n$-dimensional subspace $Q_i(n)$ defined in~\cref{eq:qn} satisfies
\begin{align}
	\label{eq:approxest}
	\inf_{v\in Q_i(n)}\anorm{ P u-v}{\omi} 
	\leq d_{n}(\ringi,\ringosi) \anorm{u}{\ringosi}
\end{align}
for all $u \in H_{a}(\osi)$ with $\mathcal{M}_i^R(u)=0$.
\end{theorem}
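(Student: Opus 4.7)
My plan is to take an optimal approximation of $\chi_i^R u$ from the ring subspace and ``lift'' it to an approximation of $Pu = \chi_i u$ on $\omi$ via the operator-harmonic extension already used in the construction. Since $u\in H_a(\osi)$ with $\mathcal{M}_i^R(u)=0$ restricts to an element of $H_{a,0}(\ringosi)$, the definition of $d_{n}(\ringi,\ringosi)$ together with the characterization of the optimal subspace via \cref{eq:evpharmonicring} supplies coefficients $c_1,\ldots,c_n \in \R$ such that $v^{\ast} := \sum_{k=1}^{n} c_k \chi_i^R v_k \in H_0^1(\ringi)$ satisfies
$$
 \anorm{\chi_i^R u - v^{\ast}}{\ringi} \le d_{n}(\ringi,\ringosi)\,\anorm{u}{\ringosi}.
$$
The natural candidate in $Q_i(n)$ is then $\chi_i V$, where $V := \sum_{k=1}^{n} c_k v_k^{\mathrm{ext}}$.

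I would then estimate the error $\chi_i(u - V) = \chi_i u - \chi_i V$ by decomposing $\omi$ into the disjoint pieces $\ommini$ and $\omi\setminus\ommini$. On $\omi\setminus\ommini \subset \ringi$ one has $\eta_i = 0$, hence $\chi_i = \chi_i^R$, and $v_k^{\mathrm{ext}} = v_k$, so $\chi_i(u - V) = \chi_i^R u - v^{\ast}$ there. On $\ommini$ the cut-off satisfies $\chi_i \equiv 1$, so $\nabla\chi_i = 0$ and $\chi_i(u-V) = u - \tilde V$ with $\tilde V := \sum_k c_k \tilde v_k$; hence $\anorm{\chi_i(u-V)}{\ommini} = \anorm{u-\tilde V}{\ommini}$.

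The main step is to bound this last expression. Both $u|_{\ommini}$ and $\tilde V$ are operator-harmonic on $\ommini$, so $u-\tilde V$ minimizes $\anorm{\cdot}{\ommini}$ among all functions sharing its trace on $\partial\ommini$. Since $\eta_i \equiv 0$ on $\osi\setminus\ommini$ (and hence on $\partial\ommini$) while $\chi_i \equiv 1$ on $\overline{\ommini}$, we have $\chi_i^R = 1$ on $\partial\ommini$, and consequently
$$ (\chi_i^R u - v^{\ast})\big|_{\partial\ommini} = \Bigl(u - \sum\nolimits_{k} c_k v_k\Bigr)\Big|_{\partial\ommini} = (u-\tilde V)\big|_{\partial\ommini}. $$
Extending $\chi_i^R u - v^{\ast} \in H_0^1(\ringi)$ by zero to $\omi$ therefore produces a valid competitor supported in $\ringi\cap\ommini$ with the correct trace on $\partial\ommini$, and energy minimization gives $\anorm{u-\tilde V}{\ommini} \le \anorm{\chi_i^R u - v^{\ast}}{\ringi\cap\ommini}$. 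Summing the contributions from the two disjoint pieces yields
$$ \anorm{\chi_i(u-V)}{\omi}^2 \le \anorm{\chi_i^R u - v^{\ast}}{\ringi\cap\ommini}^2 + \anorm{\chi_i^R u - v^{\ast}}{\ringi\setminus\ommini}^2 = \anorm{\chi_i^R u - v^{\ast}}{\ringi}^2, $$
which combined with the initial inequality establishes the claim. The only real obstacle is this energy-minimization argument on $\ommini$: it hinges on the competitor having exactly the right trace on $\partial\ommini$, and this is precisely why $\eta_i$ was introduced so that $\chi_i^R = 1$ on $\partial\ommini$.
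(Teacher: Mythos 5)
Your proposal is correct and follows essentially the same route as the paper's proof: the same decomposition of $\omi$ into $\ommini$ and the part of the ring outside $\ommini$, the same identification $\chi_i = \chi_i^R$ and $v_k^{\mathrm{ext}} = v_k$ on the outer part, and the same energy-minimality argument on $\ommini$ exploiting that $\chi_i^R \equiv 1$ on $\partial\ommini$ so that $\chi_i^R(u - \sum_k c_k v_k)$ is an admissible competitor for the operator-harmonic function $u - \tilde V$. The only cosmetic difference is that you fix a near-optimal $v^\ast$ up front, whereas the paper runs the estimate for arbitrary coefficients and takes the infimum at the end.
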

\begin{proof}
	To simplify the presentation of the proof, we consider a generic subdomain $\omega = \omega_i$ and omit the subscript~$i$. 
	First, we consider the case of an interior subdomain.
	Let $w = \sum_{k=1}^n \beta_k  v_k$ and $w^{\mathrm{ext}} = \sum_{k=1}^n \beta_k  v_k^\mathrm{ext}$ with $v_k$ and $v_k^\mathrm{ext}$ denoting the eigenfunctions in \cref{eq:evpharmonicring} and their harmonic extensions, cf. \cref{eq:operatorharmonicextension}.
	By construction we have $w|_{\ring\setminus\ommin} = w^{\mathrm{ext}}|_{\ring\setminus\ommin}$ and $\chi|_{\ring\setminus\ommin} = \chi^R|_{\ring\setminus\ommin}$, thus
	\begin{align}
		\label{eq:approximationResultRing}
		\anorm{\chi(u-w^{\mathrm{ext}})}{{\ring\setminus\ommin}} 
		= \anorm{\chi^R(u-w)}{{\ring\setminus\ommin}}. 
	\end{align}
	Next, we estimate the energy of $P u-v$ on the domain $\ommin$
	for an arbitrary $v=\chi w^{\mathrm{ext}}\in Q(n)$. 
	Since $u \in  H_{a}(\os)$ and $w^{\mathrm{ext}}|_{\ommin}$ is operator-harmonic on $\ommin$,
	\begin{align*}
		a_{\ommin}(u|_{\ommin} - w^{\mathrm{ext}}|_{\ommin}, \varphi) = 0\qquad \forall \varphi\in H_0^1(\ommin). 
	\end{align*}
	Since $w|_{\partial\ommin} = w^{\mathrm{ext}}|_{\partial\ommin}$ and $\chi^R|_{\partial\ommin} = 1$ the functions 
	$u-w^{\mathrm{ext}}$ and $\chi^R(u-w)$ have the same trace on $\partial\ommin$ and thus energy-minimality of $u-w^{\mathrm{ext}}$ implies
	\begin{align}
		\label{eq:approximationResultInner}
		\anorm{u-w^\mathrm{ext}}{\ommin}
		\leq \anorm{\chi^R(u-w)}{\ommin}
		= \anorm{\chi^R(u - w)}{\ommin\cap\ring}.
	\end{align} 
	Inequalities \cref{eq:approximationResultRing} and \cref{eq:approximationResultInner} together with 
	$\chi|_{\ommin} = 1$ imply that
	\begin{align*}
		\anorm{P u - v}{\om}^2 
		= \anorm{u-w^\mathrm{ext}}{\ommin}^2 + \anorm{\chi(u-w^{\mathrm{ext}})}{\ring\setminus\ommin}^2
		\leq \anorm{\chi^R (u - w)}{\ring}^2.
	\end{align*}
	The bound in \cref{eq:approxest} 
	follows from the definition of the $n$-width $d_{n}(\ringi,\ringosi)$ in \cref{eq:nwidthP_ring} 
	by taking the infimum over all $v\in Q(n)$, since $u \in H_{a}(\osi)$ and $\mathcal{M}_i^R(u)=0$ imply $u|_{\ringos}\in H_{a,0}(\ringos)$.
	For boundary subdomains, the result and its proof are very similar, except that $H_{a}(\os)$ and $H_{a,0}(\ringos)$ are replaced by $H_{a,0}^b(\os)$ and $H_{a,0}^b(\ringos)$ as defined in~\cref{eq:harmonic0boundary}, which satisfy homogeneous Dirichlet boundary conditions on $\partial \os \cap \partial \Omega$. The rest of the proof directly generalizes to boundary subdomains.
\end{proof}

\begin{remark}[Alternative method]
	\label{rem:cutoff}
	One could also consider $\ommini = \omi\setminus\ringi$ and define~$\chi_i^R$ by setting $\chi_i^R = \chi_i$ on $\ringi$ and 
	$\chi_i^R = 0$ on $\osi\setminus\ringi$.
    This defines an alternative method to the one proposed in this work. Numerical experiments show that 
	both approaches perform similarly.
	However, the theoretical analysis for the alternative method is more involved. The crucial point lies in estimating
	the approximation error in $\ommini$ by the approximation error 
	in $\ringi$. We achieved this by using energy-minimality in~\cref{eq:approximationResultInner}.
	This argument only works because $\ommini\cap\ringi\neq\emptyset$ and 
	$\chi_i^R$ is a cut-off function to the interior and hence
	allows for an extension by $0$ to the whole of~$\ommini$. For the alternative method this
	is not possible and instead of energy-minimality one has to use extension, trace and Poincaré inequalities, 
	which introduce additional dependencies on 
	the contrast and the width of $\ringi$. 
\end{remark}

The  theorem above estimated the local approximation error in terms of the \mbox{$n$-width} $d_{n}(\ringi,\ringosi)$. 
In the following theorem we establish the nearly exponential decay of this $n$-width with respect to the dimension of the 
approximation space. For simplicity, as in \cite{Ma22}, we restrict the detailed analysis to cubic subdomains~$\omega_i$. 
We call the subdomains $\ringosi$ and $\ringi$ concentric rings if there exist concentric (truncated) cubes $B_1, B_2, B_1^*,B_2^*\subset \Omega$ 
with $B_2\subset B_1$ and $B_2^*\subset B_1^*$ such~that
\begin{align*}
	\ringosi = B_1^* \setminus B_2^*, \quad \ringi = B_1 \setminus B_2.
\end{align*}
For a generalization to more general subdomains, see also \cite[Rem.~3.15]{Ma24}.
\begin{theorem}[Nearly exponential decay of $n$-width]\label{thm:decaykolmogorov}
Assume that $\ringi$ and $\ringosi$ are concentric rings.
There exist constants ${C_{d,i}, {\cAdd c_{d,i}>0}}$, and $n_0 \in \mathbb N$ such that it holds for any $n \geq n_0$ that
\begin{equation}\label{eq:1-5-5}
	d_{n}(\ring,\ringos)
		\leq  C_{d,i} e^{-{\cAdd c_{d,i}} n^{1/d}}.
\end{equation}
Denoting by $H^*_i$ the diameter of the subdomain~$\osi$, 
the constant $C_{d,i}$ scales as
\begin{equation*}
C_{d,i} \sim C_{\alpha, i}^{1/2} C_{\chi^R}\frac{H_i^{\ast}}{\cAdd{\delta_i}}.
\end{equation*} 
The constant $C_{\alpha,i}$ is the local contrast on $\ringosi$ defined as the local upper bound of the coefficient $\bm A$ divided by its local lower bound, cf. \cref{eq:propA}.

\end{theorem}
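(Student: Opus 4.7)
The plan is to adapt the iterative "one-step-then-compose" strategy developed for the MS-GFEM on full subdomains in \cite{BabL11, Ma22, Ma24} to the ring setting. I would insert a sequence of $m$ concentric cubic rings
\begin{equation*}
\ringi = \tilde R_0 \subset \tilde R_1 \subset \dots \subset \tilde R_m = \ringosi,
\end{equation*}
each consecutive pair having a separation of order $(H_i^\ast - H_i)/m$. On each nested pair I would apply a single-step approximation result for the restriction of $a$-harmonic functions and compose the $m$ resulting contractions.

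The single-step ingredient I would establish says that for concentric rings $\tilde R_k \subset \tilde R_{k+1}$ with positive gap, there is a subspace $W_k \subset H^1(\tilde R_k)$ of dimension at most $N$ such that every $u \in H_a(\tilde R_{k+1})$ admits an approximant $v \in W_k$ with
\begin{equation*}
\|u-v\|_{a,\tilde R_k} \leq b\,\|u\|_{a,\tilde R_{k+1}}
\end{equation*}
for some universal $b < 1$. This would be proved by combining a ring-version of the Caccioppoli inequality, which bounds the local energy of an $a$-harmonic function by its $L^2$-norm on a slightly larger ring divided by the gap width (contributing a factor $C_{\alpha,i}^{1/2}$), with an $L^2$-polynomial approximation of $u$ on a quasi-uniform cubic covering of $\tilde R_{k+1}$. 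Balancing the covering mesh size with the ring gap makes $b$ independent of $m$, and a cardinality count gives $N \sim (H_i^\ast m/\delta_i)^{d-1}$ per layer when one exploits the refinement of \cite{Ma22, Ma24} in which it suffices to approximate only the trace of $u$ on the inner boundary of each layer rather than $u$ itself on the full layer.

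Composing $m$ such contractions produces an approximant in a total subspace of dimension $n \lesssim m\,N \sim m^d (H_i^\ast/\delta_i)^{d-1}$ with error $\lesssim b^m\,\|u\|_{a,\ringosi}$. Solving for $m$ gives $m \sim n^{1/d}$, up to constants absorbed into $c_{d,i}$, hence
\begin{equation*}
d_n(\ringi,\ringosi) \leq C_{d,i}\,e^{-c_{d,i}\,n^{1/d}}
\end{equation*}
for all $n$ above a threshold $n_0$. Tracking constants, $C_{\alpha,i}^{1/2}$ enters through the Caccioppoli step, $C_{\chi^R}/\delta_i$ arises when the $n$-width is expressed through multiplication by $\chi_i^R$ via the bound \cref{eq:propPUring}, and $H_i^\ast$ through the geometric scaling of the ring, yielding the claimed $C_{d,i} \sim C_{\alpha,i}^{1/2} C_{\chi^R}\, H_i^\ast/\delta_i$.

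The main obstacle is the ring-version of the Caccioppoli inequality together with the sharp per-layer dimension count. On rings one must accommodate two boundary components and control the constants uniformly in the ratio between the inner and outer cubes defining the ring. I would handle this by covering each annular gap $\tilde R_{k+1}\setminus\tilde R_k$ by an overlapping family of cubes of comparable size, applying the standard cube-Caccioppoli on each, and summing, with constants that remain uniform as $m$ grows. Once this covering step is set up, the remaining algebra mirrors the corresponding estimates in \cite[Sec.~3]{Ma24} with only cosmetic modifications.
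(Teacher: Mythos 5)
Your proposal follows essentially the same route as the paper's proof: both adapt the iteration argument of \cite[Thm.~3.8~(i)]{Ma24} by inserting nested concentric rings between $\ringi$ and $\ringosi$, combine a Caccioppoli inequality with the weak ($L^2$-local) approximation property to get a per-layer contraction with per-layer dimension $\sim (H_i^*m/\delta^*)^{d-1}$, compose $m\sim n^{1/d}$ layers, and account for the doubled boundary-covering cardinality caused by the ring's two boundary components. The only cosmetic difference is that the paper isolates the final multiplication-by-$\chi_i^R$ step as a separate factor $d_m(X_aQ_2)$ in a submultiplicative width estimate to keep the algebraic prefactor $C_{\alpha,i}^{1/2}C_{\chi^R}H_i^*/\delta_i$ clean, whereas you fold that step into the layer iteration; the substance is the same.
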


\begin{proof}
	The statement can be proved by slightly modifying the proof of \cite[Thm.~3.8~(i)]{Ma24}. For the sake of brevity,
	 we will only mention the main steps and restrict ourselves to boundary subdomains.

	 The main ingredients of the proof are a Caccioppoli inequality and a weak approximation property; see \cite[Lem.~5.5~(i) \& Eq.~(5.10)]{Ma24}. 
	 We replace the nested concentric cubes in the proof of \cite[Thm.~3.8~(i)]{Ma24} by $N+1$ nested concentric rings $\ringos = \ring^1 \supset\ldots\supset \ring^{N+1}=R^{N+1}\supset\ring$ 
	 with $\mathrm{dist}_{\infty}(\ring^{k+1}, \ring^k\setminus\partial\Omega) = \delta^* /(2N)$ and $\mathrm{dist}(\ring, R^{N+1}\setminus\partial\Omega) = \delta^* /2$
	 where $\delta^* = \mathrm{dist}_{\infty}(\ring, \partial\ringos\setminus\partial\Omega)$,
	 where $\mathrm{dist}_{\infty}$ is the distance with respect to the infinity norm. 
	 The starting point of the proof is a multiplicative decomposition of the operator $P^R$ 
	 \begin{equation*}
		P^R = X_a Q_2 Q_1,
	\end{equation*}
	into the operator $X_a: H_{a,0}^b(\ring) \to H^1_0(\ring), u\mapsto \chi^R u$ and the restriction operators $Q_1: H_{a,0}^b(\ringos) \to H_{a,0}^b(R^{N+1})$
	and $Q_2 : H_{a,0}^b(R^{N+1}) \to H_{a,0}^b(\ring)$.
	 To eliminate the operator norm of the operator $X_a$ in the 
	 statement of \cite[Thm.~3.8~(i)], which would result in an additional constant involving the width of the ring $R$, we 
	 modify the proof. 
	Specifically, instead of \cite[Eq.~(3.22)]{Ma24}, we use for all $m, k \in \mathbb N$ the estimate
	\begin{equation}
		\label{eq:multiplicative_decomposition_widths}
		d_{m + k}(\ring,\ringos)\leq d_{m}(X_a Q_2) d_{k}(Q_1),
	\end{equation} 
	where $d_{m}(Q_1)$ and $d_m(X_aQ_2)$ denote the corresponding \mbox{$m$-widths} of the operators $Q_1$ and $X_aQ_2$, respectively.
	Notice that in contrast to \cite{Ma24}, $R^{N+1}\neq \ringos$ is an intermediate subdomain. 
	
	First, we bound the term $d_{m}(X_aQ_2)$. By \cite[Lem.~5.5~(ii)]{Ma24}
	there exist constants $C_1$ and $C_2$, depending only on $d$, such that for each 
	$m\geq C_1|R^{N+1}| (\delta^*/2)^{-d}$, 
	there exists an $m$-dimensional subspace $Y_{m}(R^{N+1})\subset L^2(R^{N+1})$ such that 
	\begin{equation*}
		\inf_{v\in Y_{m}(R^{N+1})} \norm{u-v}{L^2(\ring)} \leq C_2 \norm{\nabla u}{L^{2}(R^{N+1})} |R^{N+1}|^{1/d} m^{-1/d} 
		\quad \forall u\in H^1(R^{N+1}).
	\end{equation*}
	We fix $m$ to be the smallest integer larger than $2^d C_1(\frac{H^*}{\delta^*})^d$.
	Since $Q_2$ considered as a operator from $H_{a,0}^b(R^{N+1})$ to $L^2(\ring)$ is compact, 
	there also exists a $m$-dimensional subspace $W_{m}(R^{N+1})\subset H_{a,0}^b(R^{N+1})$ such that
	\begin{equation*}
		\inf_{v\in W_{m}(R^{N+1})} \norm{u-v}{L^2(\ring)} \leq C_2 \norm{\nabla u}{L^{2}(R^{N+1})} |R^{N+1}|^{1/d} m^{-1/d}
		\quad \forall u\in H_{a,0}^b(R^{N+1}).
	\end{equation*}
	The previous result and the Caccioppoli inequality imply
	\begin{align*}
		\inf_{v\in W_{m}(R^{N+1})} \anorm{\chi^R(u-v)}{\ring}
		&\leq C_{\alpha}^{1/2}C_{\chi^R} \delta^{-1} C_2 \anorm{u}{R^{N+1}} |R^{N+1}|^{1/d} m^{-1/d}
	\end{align*}
	and thus
	\begin{align}
		\label{eq:proof_decaykolmogorov_XaQ}
		d_{m}(X_aQ_2) 
		\leq C_{\alpha}^{1/2} C_{\chi^R} \frac{H^*}{\delta}
		C_2  m^{-1/d}.
	\end{align}
	Using the nested concentric rings $\ring^1,\ldots, \ring^{N+1}$ and proceeding as in the proof of \cite[Thm.~3.8~(i)]{Ma24}, we obtain that 
	\begin{equation}
	\label{eq:proof_decaykolmogorov_ntilde}
		d_{k}(Q_1) \leq e^{-\widetilde{c}k^{1/d}}
	\end{equation}
	for all $k \geq k_0 = (e\Theta)^d$, where 
		$$\Theta = 4 (3d)^{1/d} C_2 C_{\alpha}^{1/2} 
		\left(\frac{H^*}{\delta^*}\right)^{1-1/d},\qquad \widetilde{c} = (e\Theta + 1)^{-1}.$$
		Notice that the constant $\Theta$ is slightly different to the one for cubes 
		for three reasons. First, only a part of the oversampling is used for the exponential bound 
		in~\cref{eq:proof_decaykolmogorov_ntilde}, because we introduced the intermediate subdomain 
		$R^{N+1}$. Second, $\delta^* = \mathrm{dist}_{\infty}(\ring, \partial\ringos\setminus\partial\Omega)$ 
		instead of $2 \mathrm{dist}_{\infty}(\om, \partial\os\setminus\partial\Omega) = H^*-H$ in \cite{Ma24}.
		Third, one gets an additional factor of $2$ when bounding the measure of the sets 
		$V_{\delta^*/(4N)}(\ring^k\setminus\ring^{k+1})$ 
		, which 
		now consists of two rings, cf. 
		\cite[Lem. 3.13]{Ma24}.	Let $n= k + m$ and $n_0:= k_0 + 2 m$. 
		Since $k^{1/d} = (n-m)^{1/d} \geq 2^{-1/d}n^{1/d}$, \cref{eq:multiplicative_decomposition_widths}, \cref{eq:proof_decaykolmogorov_XaQ}
		and \cref{eq:proof_decaykolmogorov_ntilde} yield
	\begin{equation*}
		d_{n}(\ring,\ringos)
		\leq C_{\alpha}^{1/2} C_{\chi^R} \frac{H^*}{\delta}
		C_2 e^{-\widetilde{c} k^{1/d}}
		\leq C_{\alpha}^{1/2} C_{\chi^R} \frac{H^*}{\delta}
		C_2 e^{-2^{-1/d} \widetilde{c} n^{1/d}}.\qedhere
	\end{equation*}
\end{proof}

\subsection{A priori convergence}
After having obtained estimates for the local approximation error, we now use them in \cref{thm:1-0} to derive a global error estimate for the solution of the proposed method, formalized in the following theorem.
	\begin{theorem}[Error estimate]\label{thm:errorestimate}
		The approximation of the proposed method, satisfies the global error estimate
		\begin{align}\label{eq:errorgfemring}
				\|u-u^G\|_a &\leq \sqrt{\kappa\kappa^*} d_{\bm n}\|u\|_a,
		\end{align}
		where $d_{\bm n}$ is defined as
		\begin{equation*}
			d_{\bm n} \coloneqq \max_{i = 1,\dots,M}\; \left\{\begin{array}{rl}
				d_{n_i-1}(\ringi,\ringosi) & \text{ if }\partial \osi\cap \partial \Omega \neq \emptyset,\\
				d_{n_i}(\ringi,\ringosi)& \text{ else.}		\end{array}\right.
		\end{equation*}
	\end{theorem}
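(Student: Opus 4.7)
The plan is to invoke \cref{thm:1-0} with a carefully chosen $\phi_i \in S_{n_i}(\omi)$ for each $i$, and to reduce the local constants $\varepsilon_i$ of \cref{eq:1-3-1} to the ring $n$-widths via \cref{thm:locapproxerr}. Once I establish $\varepsilon_i \leq d_{n_i-1}(\ringi,\ringosi)$ for interior subdomains and $\varepsilon_i \leq d_{n_i}(\ringi,\ringosi)$ for boundary subdomains, the claim follows from \cref{thm:1-0} since $\max_i \varepsilon_i = d_{\bm n}$ by definition.

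For an interior subdomain, I first observe that $u|_{\osi} - \psi_i \in \aharm{\osi}$: both $u$ and $\psi_i$ satisfy $a_{\osi}(\cdot,\varphi) = F_{\osi}(\varphi)$ for every $\varphi \in H_0^1(\osi)$ (for $u$ this uses extension by zero), so the difference is operator-harmonic. Setting $c_i := \mathcal{M}_i^R(u|_{\osi} - \psi_i) \in \R$ and $w_i := u|_{\osi} - \psi_i - c_i$, the shifted function $w_i$ is admissible in \cref{thm:locapproxerr} since $w_i \in \aharm{\osi}$ and $\mathcal{M}_i^R(w_i) = 0$. Because $S_{n_i}(\omi) = \R \oplus \operatorname{span}\{v_1^{\mathrm{ext}}|_{\omi},\ldots,v_{n_i-1}^{\mathrm{ext}}|_{\omi}\}$, I may take $\phi_i = c_i + \phi_i'$ with $\phi_i'$ in the span of the extended eigenfunctions; then $\chi_i(u - u_i^p - \phi_i) = \chi_i(w_i - \phi_i')$ on $\omi$, and minimizing over $\phi_i'$ via \cref{thm:locapproxerr} applied to $w_i$ with $n = n_i - 1$ yields
\begin{equation*}
\inf_{\phi_i'}\|\chi_i(u - u_i^p - \phi_i)\|_{a,\omi} \leq d_{n_i-1}(\ringi,\ringosi)\,\|w_i\|_{a,\ringosi}.
\end{equation*}

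Since subtracting the constant $c_i$ does not affect the energy seminorm, $\|w_i\|_{a,\ringosi} \leq \|u - \psi_i\|_{a,\osi}$. Galerkin orthogonality $a_{\osi}(u - \psi_i,\psi_i) = 0$ (available because $\psi_i \in H_0^1(\osi)$) yields the Pythagorean identity $\|u\|_{a,\osi}^2 = \|u - \psi_i\|_{a,\osi}^2 + \|\psi_i\|_{a,\osi}^2$, so $\|u - \psi_i\|_{a,\osi} \leq \|u\|_{a,\osi}$ and \cref{eq:1-3-1} is verified with $\varepsilon_i = d_{n_i-1}(\ringi,\ringosi)$. For boundary subdomains the argument is analogous: the particular function $u_i^p = (\psi_i + \psi_i^b)|_{\omi}$ ensures that $u|_{\osi} - \psi_i - \psi_i^b \in H_{a,0}^b(\osi)$, no constant shift is needed since the Dirichlet trace condition replaces $\mathcal{M}_i^R = 0$, and the boundary-subdomain variant of \cref{thm:locapproxerr} applied with $n = n_i$ together with the same energy-minimality bound yields $\varepsilon_i = d_{n_i}(\ringi,\ringosi)$. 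Assembling the two cases and invoking \cref{thm:1-0} completes the proof.

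The main obstacle is the bookkeeping of the additive constant $c_i$ for interior subdomains: ensuring that $c_i$ can be absorbed into $S_{n_i}(\omi)$ is exactly what forces the asymmetric indexing $n_i - 1$ versus $n_i$ in the definition of $d_{\bm n}$, and it relies on the identification (noted after \cref{eq:evpharmonicring}) of the eigenpairs of \cref{eq:evpharmonicringincludingconstant} with those of \cref{eq:evpharmonicring} modulo the zero eigenvalue associated with constants.
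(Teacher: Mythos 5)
Your proposal is correct and follows essentially the same route as the paper: reduce to the local estimate \cref{eq:1-3-1} of \cref{thm:1-0}, absorb the constant $\mathcal M_i^R(u|_{\osi}-\psi_i)$ into the $\R$-component of $S_{n_i}(\omi)$ so that \cref{thm:locapproxerr} applies with $n=n_i-1$ (resp.\ $n=n_i$ for boundary subdomains), and conclude with $\|u|_{\osi}-\psi_i\|_{a,\osi}\leq\|u\|_{a,\osi}$ via the orthogonality \cref{eq:1-5-4-2}. The only cosmetic difference is that you spell out the Pythagorean identity that the paper leaves implicit.
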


\begin{proof}	
	It suffices to bound the local approximation error~\cref{eq:1-3-1}. 
	The result then follows directly from \cref{thm:1-0}. We start with interior subdomains; boundary 
	subdomains will be discussed later. 
	
	To simplify the presentation of the proof, we will again omit the 
	subscript~$i$ for the subdomains. First, we note that the definition 
	of the local particular function in \cref{eq:locparticularfun} and the relation $H^1_0(\os) \subset H^1_0(\Omega)$ 
	(implicit extension by zero) imply that
	\begin{equation}\label{eq:1-5-4-2}
		a_{\os}(u|_{\os}-\psi,\varphi) = 0\qquad \forall  \varphi\in H_{0}^{1}(\omega^{\ast}_{i}).
	\end{equation}
We now construct a function $\phi \in S_{n}(\om)$ for which the local approximation error~\cref{eq:1-3-1} is small. 
In particular, we choose $v \in V_{n-1} \coloneqq \operatorname{span}\{v_1^\mathrm{ext}|_{\om},\dots,v_{n-1}^\mathrm{ext}|_{\om}\}$
and set $\phi = \mathcal M^R(u-\psi)+v$ 
such that due to~\cref{eq:locpartfunandlocappspace}, the local approximation error is
	\begin{align*}
\|\chi(u|_{\os} - \psi - \phi)\|_{a,\om}
= \|\chi(u|_{\os}-\psi - \mathcal M^R(u|_{\os}-\psi) -v)\|_{a,\om}.
	\end{align*}
	Since $u|_{\om} - \psi - \mathcal M^R(u|_{\om}-\psi) \in H_{a}(\os)$, \cref{thm:locapproxerr} guarantees 
	the existence of $v \in V_{n-1}$ such that 
	\begin{align}
		\label{eq:locestinteriorsubdomain}
		\begin{split}
			\|\chi(u|_{\os} - \psi - \phi)\|_{a,\om}
			&\leq d_{n-1}(\ring,\ringos)\|u|_{\os} - \psi - \mathcal M^R(u|_{\os}-\psi)\|_{a,\ringos}\\
			&\leq d_{n-1}(\ring,\ringos)\|u\|_{a,\os}.
		\end{split}
	\end{align}
Here we used that due to \cref{eq:1-5-4-2}, $\|u|_{\os}-\psi\|_{a,\os} \leq \|u\|_{a,\os}$.
	
For boundary subdomains, the proof is very similar. The main difference is the use of a different local 
particular function, cf.~\cref{eq:locpartfunandlocappspaceboundary}, as well as a different space of operator-harmonic functions, 
where Dirichlet boundary conditions are imposed, cf.~\cref{eq:harmonic0boundary}. 
Due to the Dirichlet boundary conditions, $S_n(\om)$ has no additional constant component, cf. \cref{eq:locpartfunandlocappspaceboundary}, and we 
obtain the estimate
\begin{align*}
	\|\chi(u|_{\os} - \psi - \psi^b - \phi)\|_{a,\om}
	\leq d_{n}(\ring,\ringos)\|u\|_{a,\os},
\end{align*}
using $d_{n-1}(\ring,\ringos)$ instead of $d_{n}(\ring,\ringos)$ in \cref{eq:locestinteriorsubdomain}.
\end{proof}

Substituting the a priori decay estimate for the $n$-width $d_n(\ringi,\ringosi)$ from \cref{thm:decaykolmogorov} into the estimate from the previous theorem yields an a priori error estimate for the proposed method, which is given in the following corollary.
\begin{corollary}[A priori error estimate]\label{cor:apriorierrorstimate}
	The approximation of the proposed method satisfies the a priori error estimate
	\begin{equation}
	\|u-u^G\|_a	\leq  C_d\exp(c_d) \sqrt{\kappa\kappa^*} \exp(-c_dn^{{1}/{d}}),
	\end{equation}
	 where we set $n \coloneqq  n_1= \dots = n_M$, $C_d \coloneqq \max\limits_{i = 1,\dots,M} C_{d,i}$ 
	 and $c_d \coloneqq \min\limits_{i = 1,\dots,M} c_{d,i}$.
\end{corollary}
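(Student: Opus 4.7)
The plan is to combine the GFEM error estimate of \cref{thm:errorestimate} with the nearly exponential $n$-width decay from \cref{thm:decaykolmogorov}, and then take the maximum over subdomains to uniformize the constants. Starting from
\[
\|u-u^G\|_a \leq \sqrt{\kappa\kappa^*}\, d_{\bm n}\, \|u\|_a,
\]
the task reduces to bounding $d_{\bm n}$, which is defined as the maximum over $i$ of either $d_{n_i}(\ringi,\ringosi)$ for interior subdomains or $d_{n_i-1}(\ringi,\ringosi)$ for boundary subdomains. Assuming $n$ is chosen large enough so that $n-1 \geq n_0$ uniformly over all subdomains, I would apply \cref{thm:decaykolmogorov} to each subdomain separately and then absorb the $C_{d,i}$ and $c_{d,i}$ into the quantities $C_d = \max_i C_{d,i}$ and $c_d = \min_i c_{d,i}$.

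For interior subdomains the bound is immediate and gives $C_d \exp(-c_d n^{1/d})$. For boundary subdomains one gets instead $C_d \exp(-c_d (n-1)^{1/d})$, and to bring this to the form stated in the corollary I would invoke the elementary inequality $(n-1)^{1/d} \geq n^{1/d} - 1$, valid for $n \geq 1$ and $d \geq 1$ because $t \mapsto t^{1/d}$ is concave on $[1,\infty)$ with derivative at most one there. This yields
\[
\exp(-c_d(n-1)^{1/d}) \leq \exp(c_d)\exp(-c_d n^{1/d}),
\]
which accounts for the $\exp(c_d)$ prefactor appearing in the statement. Taking the maximum of the interior and boundary contributions and substituting into the estimate of \cref{thm:errorestimate} then gives the desired bound.

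The only mild obstacle is this index shift for boundary subdomains, which is fully resolved by the elementary inequality above; there is no additional analytic work beyond what has already been done in \cref{thm:errorestimate,thm:decaykolmogorov}. It is implicit in the statement that $n$ lies in the asymptotic regime where \cref{thm:decaykolmogorov} applies for the smaller index $n-1$ as well, and that a factor $\|u\|_a$ is carried through on the right-hand side inherited from \cref{thm:errorestimate}.
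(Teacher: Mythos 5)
Your argument is correct and coincides with the paper's (implicit) proof: the corollary is obtained by substituting the decay bound of \cref{thm:decaykolmogorov} into the estimate of \cref{thm:errorestimate}, with the shift from $n-1$ to $n$ absorbed into the $\exp(c_d)$ prefactor via $(n-1)^{1/d}\geq n^{1/d}-1$, exactly as you do. One micro-remark: to land on $\exp(c_d)$ with $c_d=\min_i c_{d,i}$ rather than $\exp\bigl(\max_i c_{d,i}\bigr)$, first replace $c_{d,i}$ by $c_d$ in the (nonpositive) exponent and only then apply the shift inequality; and, as you correctly note, the right-hand side should carry the factor $\|u\|_a$.
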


	We emphasize that in order to obtain a practical method, the local but still infinite-dimensional problems \cref{eq:locparticularfun,eq:extensionring,eq:evpharmonicring} must be discretized. For this purpose, classical finite element methods are typically used. 
	For an analysis of the resulting fully discrete MS-GFEM, we refer to \cite{Ma24}. It can be adapted easily to the present setting of rings.
More specifically, the main tools in \cref{thm:locapproxerr} and \cref{thm:decaykolmogorov}
were 
energy-minimality of operator-harmonic functions, the Caccioppoli inequality and the weak approximation 
property. All of these properties have discrete analogues and hence 
the modification of MS-GFEM to use eigenproblems on the rings does not affect the analysis in the fully 
discrete setting.

\section{Application as preconditioner}\label{sec:preconditioner}

As shown in \cite{Strehlow2024}, the proposed multiscale method can also be written as an iterative method, which motivates its use
as a two-level Restricted Additive Schwarz (RAS) preconditioner. 
The presentation of this section closely follows \cite{Strehlow2024}.

\subsection{Iterative version}\label{iterativemsgfem}
To derive an iterative version of the proposed method, we  interpret it within the framework of domain decomposition methods. 
Specifically, the particular functions $u^p_i$ in \cref{eq:locparticularfun} can be viewed as local subdomain solves on the oversampling
 domains \(\{\osi\}_{i=1}^M\), where the results are ``glued'' together using the partition of unity \(\{\chi_i\}_{i=1}^M\). 
 Furthermore, the solution of the proposed method itself can be understood as a coarse space correction applied to this one-level
  method, with \(S_{\bm n}(\Omega)\) in \cref{eq:locpartfunandlocappspace} and \cref{eq:locpartfunandlocappspaceboundary} serving as the coarse space. This becomes clear when writing the approximation as 
  \(u^G = u^s + u^p\), where \(u^s \in S_{\bm n}(\Omega)\) is the unique function satisfying
\[
a(u^s, v) = a(u - u^p, v)\qquad \forall  v \in S_{\bm n}(\Omega), 
\]
which is a reformulation of~\cref{eq:weakformulationgfem}.
These two steps, namely the local subdomain solves and the coarse grid correction, define one iteration of the iterative method. 
In subsequent iterations, we use the approximate solution $u_G$ as Dirichlet data of the previous iteration for the local subdomain solves and again perform a coarse grid correction. Repeating this process yields the desired iterative method.

To formalize this iteration, we introduce the linear operator $G$, which maps a function $w \in H^1(\Omega)$ to the solution $u^G$ of the proposed method solving~\cref{eq:weakformulationgfem} for the right-hand side $F\coloneqq a(w,\cdot)$. This mapping can be equivalently expressed as
\begin{equation}
	\label{eq:edfG}
	Gw = \sum_{i=1}^M \chi_i \pi_i(w) + \pi_s \bigg( w - \sum_{i=1}^M \chi_i \pi_i(w) \bigg)
\end{equation}
with $\pi_i\colon H^1(\Omega)\to H^1_0(\osi)$ and $\pi_s\colon H^1(\Omega)\to S_{\bm n}(\Omega)$ denoting the $a$-orthogonal projections onto $H^1_0(\osi)$ and $S_{\bm n}(\Omega)$, respectively.
Given an initial guess ${u^{0} \in H^1(\Omega)}$, the sequence of iterates $\{u^{j}\}_{j =0}^\infty$ is then defined for $j = 0,1,\dots$ by the recurrence
\begin{equation}
	\label{eq:iteration}
	u^{j+1} \coloneqq  u^{j} + G\big( u - u^{j} \big).
\end{equation}
Note that this iteration can be viewed as a preconditioned Richardson-type iteration, where~$G$ is the preconditioned operator. 
The following lemma proves the strictly monotone convergence of this iteration with respect to the energy norm,
provided $\bm n$ is sufficiently large such that $G$ is a contraction in the energy norm.
 
\begin{lemma}[Convergence of iteration] \label{cor2}
	Let $\bm n = (n_1,\dots,n_M)$ be chosen such that
	\begin{equation}
		\label{eq:assumptionexistencerate}
		\|u-u^G\|_a \leq \vartheta \|u\|_a
	\end{equation}
	holds for a constant $0<\vartheta<1$. Then the iteration $\{v^j\}_{j =0}^\infty$ defined by \cref{eq:iteration} converges for any initial guess $u^{0}\in H^1(\Omega)$ and   it holds for all $j = 0,1,\dots$ that
\begin{equation}
	\label{eq:convergenceiteration}
	\lVert u^{j+1} - u \rVert_a \leq \vartheta \lVert u^{j} - u \rVert_{a}.
\end{equation}
\end{lemma}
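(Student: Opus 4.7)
The plan is to reduce the convergence statement to a one-step contraction estimate, which in turn reduces to a single application of \cref{thm:errorestimate}. First, I would rewrite the iteration in the form of an error recursion. Subtracting $u$ from both sides of \cref{eq:iteration} gives
\begin{equation*}
u - u^{j+1} = (I - G)(u - u^j),
\end{equation*}
so the desired bound \cref{eq:convergenceiteration} is equivalent to the operator estimate $\|(I - G)w\|_a \leq \vartheta \|w\|_a$ applied to $w = u - u^j$.

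Second, I would identify $(I - G)w$ as a GFEM error. The description preceding \cref{eq:edfG} together with the closed-form expression for $G$ shows that $\chi_i \pi_i(w)$ plays the role of the local particular function $u_i^p$ for the auxiliary problem with right-hand side $a(w, \cdot)$, while $\pi_s(w - \sum_i \chi_i \pi_i(w))$ is the corresponding coarse-space correction $u^s$ defined through \cref{eq:weakformulationgfem}. Since the exact solution of this auxiliary problem is $w$ itself, the quantity $\|w - Gw\|_a$ is precisely the GFEM approximation error for that problem.

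Third, I would invoke \cref{thm:errorestimate}. Its constant $\sqrt{\kappa\kappa^*}\, d_{\bm n}$ depends only on $\bm n$ and on the geometry of the subdomain decomposition, but not on the exact solution of the auxiliary problem. The hypothesis \cref{eq:assumptionexistencerate} therefore implicitly forces $\sqrt{\kappa\kappa^*}\, d_{\bm n} \leq \vartheta$, and applying \cref{thm:errorestimate} with $w$ in place of $u$ yields the uniform bound $\|w - Gw\|_a \leq \vartheta \|w\|_a$. Combined with the error recursion this gives the one-step contraction \cref{eq:convergenceiteration}, and a direct induction then produces $\|u - u^j\|_a \leq \vartheta^j \|u - u^0\|_a$, which in particular implies convergence.

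The main obstacle, and the only point that requires care, is the transition from the assumption, stated only for the fixed solution $u$ of \cref{eq:PDEweak}, to the uniform estimate needed for all iteration errors $w = u - u^j$. The resolution is exactly the observation that $\vartheta$ is a surrogate for the solution-independent constant $\sqrt{\kappa\kappa^*}\, d_{\bm n}$ appearing in \cref{thm:errorestimate}, so the assumption on $\bm n$ is equivalent to a uniform contraction statement. After this identification the rest of the argument is essentially bookkeeping.
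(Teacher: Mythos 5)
Your argument is correct and follows essentially the same route as the paper: the paper's proof likewise subtracts $u$ from the iteration to obtain the error recursion $u^{j+1}-u=(I-G)(u^j-u)$ and then applies the uniform operator bound $\|(I-G)v\|_a\leq\vartheta\|v\|_a$. The only difference is that the paper asserts this uniform bound "follows directly" from the hypothesis, whereas you explicitly justify it by identifying $(I-G)w$ as the GFEM error for the auxiliary problem with right-hand side $a(w,\cdot)$ and using the solution-independence of the constant $\sqrt{\kappa\kappa^*}\,d_{\bm n}$ in \cref{thm:errorestimate} --- a worthwhile clarification of the step the paper leaves implicit.
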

\begin{proof}
It follows directly from \cref{eq:assumptionexistencerate} that the inequality \( \|(I - G)v\|_a \leq \vartheta \|v\|_a \) holds for all \( v \in H^1(\Omega) \), where $I$ denotes the identity operator. Subtracting the solution~\( u \) of \cref{eq:PDEweak} from both sides of equation \cref{eq:iteration}, we obtain that
\[
u^{j+1} - u \coloneqq (I - G)(u^{j} - u).
\]
Applying the above estimate for the operator \( G \), the assertion can be concluded.
\end{proof}

\subsection{Preconditioned GMRES}
To construct a preconditioner for GMRES, we first introduce a fine-scale finite element discretization and give the matrix representations of the corresponding discrete operators. We consider a fine mesh~$\mathcal T_h$, which resolves all microscopic details of the coefficients, and denote by \( V^h \subset H^1(\Omega) \) a corresponding classical polynomial-based finite element space. We further introduce the subspace \( V_0^h \coloneqq V^h \cap H^1_0(\Omega) \) of functions satisfying homogeneous Dirichlet boundary conditions on $\partial \Omega$. Given a subdomain \( S \subset \Omega \), we also introduce local versions of the spaces \( V^h \) and \( V_0^h \) by \( V^h(S) \coloneqq (V^h)|_S \) and \( V_0^h(S) \coloneqq (V^h \cap H^1_0(S))|_S \), respectively. Note that for the matrix representations given below we will always consider a classical Lagrange finite element basis.

The matrix representation of the extension-by-zero operator $V_0^h(\omega_i^*) \rightarrow V^h$ is in the following denoted by ${\bm R}_i^T$. Similarly, we denote by ${\bm R}^{T}_S$ the matrix representation of the natural embedding $S_{\bm n}^h(\Omega) \rightarrow V^h$, where $S_{\bm n}^h(\Omega) \subset V_0^h$  is a fully discrete version of the GFEM space $S_{\bm n}(\Omega)$, cf.~\cref{eq:Sn}.  Denoting by $\bm{K}$ the stiffness matrix associated with the bilinear form $a$, we define $\bm{K}_i \coloneqq {\bm R}_i \bm{K} {\bm R}_i^T$ and $\bm{K}_S \coloneqq {\bm R}_S \bm{K} {\bm R}_S^T$. 
The projections $\pi_i$ and $\pi_S$ can then be written in matrix form as:
\begin{equation*}
	\boldsymbol{\pi}_i = \bm{K}_i^{-1} {\bm R}_i \bm{K},\qquad \boldsymbol{\pi}_S = \bm{K}_S ^{-1}{\bm R}_S \bm{K}.
\end{equation*}
The matrix form of the linear operator \( G \) then reads
\begin{equation*}
	{\bm G} = {\bm B \bm{K}}, \quad
	{\bm B} \coloneqq  \bigg( \sum_{i=1}^{M} {\bm R}_i^T {\boldsymbol{\chi}}_i \bm{K}_i^{-1} {\bm R}_i \bigg) + 
	\big({\bm R}_S^T \bm{K}_S^{-1} {\bm R}_S \big) 
	\bigg( {\bm I} - \sum_{i=1}^{M} {\bm R}_i^T {\boldsymbol{\chi}}_i \bm{K}_i^{-1} {\bm R}_i \bigg),
\end{equation*}
where \( {\bm I} \) denotes the identity matrix and \( {\boldsymbol{\chi}}_i \) is the matrix representation 
of the {\cAdd operator \( V^h_0(\omega_i^*) \to V^h_0(\omega_i^*) \), \( v^h \mapsto I_h(\chi_i v^h) \) with the nodal interpolation operator~\( I_h \).}

Thus, the preconditioned system of equations can be obtained by multiplying $\bm{K}\bm{u} = \bm{f}$ from the left with $\bm{B}$, i.e., 
\begin{equation} \label{preconditioned-system}
	{\bm B \bm{K} \bm u} = {\bm B \bm f}.
\end{equation}
The preconditioner ${\bm B}$ can be viewed as a Two-Level Hybrid Restricted Additive Schwarz preconditioner, where the word ``hybrid'' refers to the multiplicative incorporation of the coarse space. A preconditioner is typically used in conjunction with a Krylov subspace method to accelerate convergence. Since the matrix ${\bm B}$ is non-symmetric, we employ the GMRES method for solving~\cref{preconditioned-system}.

In each iteration, the GMRES method solves a least squares problem with respect to a given {\cAdd norm $\|\cdot\|_b$, which is induced by an inner product $b(\cdot,\cdot)$ on $\R^n$.} More specifically, starting with an initial guess $\bm{v}^{0} \in \mathbb{R}^n$, the sequence of GMRES iterates $\{\bm{v}^{j}\}_{j=0}^\infty$ is generated by solving the following minimization problem:
\begin{equation*}
	\label{eq:minimizationGMRES}
	\min_{\bm{v} \in \bm{v}^{0} + \mathcal{K}_j} \| \bm{B}\bm{K}(\bm{u} - \bm{v}) \|_b,
\end{equation*}
where $\mathcal{K}_j$ denotes the Krylov subspace of dimension $j \in \mathbb{N}$, defined as
\[
\mathcal{K}_j := \operatorname{span} \left\{ \bm{B}\bm{K}(\bm{u} - \bm{v}^{0}), (\bm{B}\bm{K})^2(\bm{u} - \bm{v}^{0}), \dots, (\bm{B}\bm{K})^j(\bm{u} - \bm{v}^{0}) \right\}.
\]
The minimization property implies that, when starting from the same initial guess, the GMRES residuals are less than or equal to the residuals of the preconditioned Richardson iteration, which is formalized in the following proposition.
{\cAdd
\begin{proposition}[Relation of residuals]
For all $j = 0,1, \dots$ it holds that
\[
\| \bm{B}\bm{K}(\bm{u} - \bm{v}^{j}) \|_b \leq \| \bm{B}\bm{K}(\bm{u} - {\cAdd \bm{u}^{j}}) \|_b,
\]
where $\{\bm{u}^{j}\}_{j = 0}^\infty$ denote the coefficient vectors of a discrete version of~\cref{eq:iteration} with the initial guess $\bm{u}^{0} = \bm{v}^{0}$. 
\end{proposition}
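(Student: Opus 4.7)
The plan is to show that the Richardson iterate $\bm{u}^{j}$ lies in the affine Krylov subspace $\bm{v}^{0} + \mathcal{K}_j$ over which GMRES minimizes, so that the GMRES optimality yields the claim. The discrete version of the iteration \cref{eq:iteration} in matrix form reads $\bm{u}^{j+1} = \bm{u}^{j} + \bm{B}\bm{K}(\bm{u} - \bm{u}^{j})$, since $\bm{G} = \bm{B}\bm{K}$ is the matrix representation of the operator $G$. Setting $\bm{e}^{j} \coloneqq \bm{u} - \bm{u}^{j}$ and subtracting both sides from $\bm{u}$ gives the recurrence
\begin{equation*}
\bm{e}^{j+1} = (\bm{I} - \bm{B}\bm{K})\bm{e}^{j}, \qquad \text{hence}\qquad \bm{e}^{j} = (\bm{I} - \bm{B}\bm{K})^{j}\bm{e}^{0}.
\end{equation*}

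Next, I would express the Richardson update relative to the initial guess. From the identity above,
\begin{equation*}
\bm{u}^{j} - \bm{u}^{0} = \bm{e}^{0} - \bm{e}^{j} = \bigl(\bm{I} - (\bm{I} - \bm{B}\bm{K})^{j}\bigr)\bm{e}^{0}.
\end{equation*}
Expanding $(\bm{I} - \bm{B}\bm{K})^{j}$ by the binomial theorem, every term except the leading $\bm{I}$ contains at least one factor $\bm{B}\bm{K}$; thus $\bm{I} - (\bm{I} - \bm{B}\bm{K})^{j} = \bm{B}\bm{K}\, q(\bm{B}\bm{K})$ for some polynomial $q$ of degree $j-1$. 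In particular,
\begin{equation*}
\bm{u}^{j} - \bm{u}^{0} \in \operatorname{span}\bigl\{\bm{B}\bm{K}\bm{e}^{0},\,(\bm{B}\bm{K})^{2}\bm{e}^{0},\dots,(\bm{B}\bm{K})^{j}\bm{e}^{0}\bigr\}.
\end{equation*}

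Since by assumption $\bm{u}^{0} = \bm{v}^{0}$, we have $\bm{e}^{0} = \bm{u} - \bm{v}^{0}$, so the span on the right is exactly $\mathcal{K}_j$ as defined in the statement. Therefore $\bm{u}^{j} \in \bm{v}^{0} + \mathcal{K}_j$, and the minimization property of GMRES immediately gives
\begin{equation*}
\|\bm{B}\bm{K}(\bm{u} - \bm{v}^{j})\|_{b} \;=\; \min_{\bm{v} \in \bm{v}^{0} + \mathcal{K}_j}\|\bm{B}\bm{K}(\bm{u} - \bm{v})\|_{b} \;\leq\; \|\bm{B}\bm{K}(\bm{u} - \bm{u}^{j})\|_{b},
\end{equation*}
which is the claimed inequality. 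There is no real obstacle here beyond the bookkeeping step of rewriting the Richardson error as a polynomial in $\bm{B}\bm{K}$ applied to the initial residual; everything else is the standard Krylov optimality argument.
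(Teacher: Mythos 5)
Your proof is correct. The paper itself does not spell out an argument here --- it simply defers to \cite[Prop.~3.5]{Strehlow2024} --- so your write-up supplies the standard Krylov-optimality argument that the cited reference is based on: the Richardson error satisfies $\bm{e}^{j}=(\bm{I}-\bm{B}\bm{K})^{j}\bm{e}^{0}$, hence $\bm{u}^{j}-\bm{v}^{0}=\bigl(\bm{I}-(\bm{I}-\bm{B}\bm{K})^{j}\bigr)\bm{e}^{0}\in\mathcal{K}_j$, and the GMRES minimization property (which is exactly how $\bm{v}^{j}$ is defined in the paper) gives the inequality. The bookkeeping is right, including the observation that the paper's $\mathcal{K}_j$ is spanned by $\bm{B}\bm{K}\bm{e}^{0},\dots,(\bm{B}\bm{K})^{j}\bm{e}^{0}$, which matches the span you obtain; no gaps.
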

\begin{proof}
The proof of this result can be found in \cite[Prop.~3.5]{Strehlow2024}.
\end{proof}
}
The convergence properties of the GMRES method depend on the norm equivalence constants of the norm $\|\cdot\|_b$ and the energy vector norm, defined for all \( \bm{v} \in \mathbb{R}^n \) as \( \|\cdot\|_a^2 \coloneqq \bm{v}^T \bm{K} \bm{v} \). Specifically, 
it depends on the constants \( 0 < \beta_{\mathrm{min}} \leq \beta_{\mathrm{max}} < \infty \) 
in
\begin{equation*}
	\beta_{\mathrm{min}} \|\bm{v}\|_b \leq \|\bm{v}\|_a \leq \beta_{\mathrm{max}} \|\bm{v}\|_b\qquad \forall \bm{v} \in \mathbb R^n.
\end{equation*}
Note that if the considered family of meshes \(\{ \mathcal{T}_h \}_h\) is quasi-uniform in the sense of~\cite[Def.~1.140]{ErG04} {\cAdd and $b(\cdot,\cdot)$ is the Euclidean norm}, the norm equivalence constants scale as \( \beta_{\mathrm{min}} \sim \alpha_{\mathrm{max}}^{-1/2} h^{-d/2+1} \) and \( \beta_{\mathrm{max}} \sim \alpha_{\mathrm{min}}^{-1/2} h^{-d/2} \). 
\begin{theorem}[Convergence of preconditioned GMRES]
The GMRES iterates $\{\bm{v}^j\}_{j = 0}^\infty$ converge for any initial guess $\bm{v}^0 \in \mathbb R^n$ with the estimate
\[
\|\bm{B} \bm{K} (\bm{u} - \bm{v}^{j})\|_b \leq \vartheta^j \left( \frac{1 + \vartheta}{1 - \vartheta} \right) \frac{\beta_{\mathrm{max}}}{\beta_{\mathrm{min}}} \|\bm{B} \bm{K} (\bm{u} - \bm{v}^{0})\|_b,
\]
which holds for all \( j \in\N \).
\end{theorem}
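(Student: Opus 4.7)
The overall plan is to chain together three previously established facts: the energy-norm contraction of the Richardson iteration from \cref{cor2}, the GMRES-versus-Richardson residual comparison from the preceding proposition, and the norm equivalence between $\|\cdot\|_a$ and $\|\cdot\|_b$ on $\mathbb{R}^n$. The factor $(1+\vartheta)/(1-\vartheta)$ in the statement is the typical constant one picks up when translating an error contraction into a residual contraction, so the crux is to carry out this translation cleanly and then pay the $\beta_{\max}/\beta_{\min}$ price for switching norms.

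First, I would work with the discrete Richardson sequence $\{\bm{u}^j\}$ started at $\bm{u}^0 := \bm{v}^0$. Writing $\bm{e}^j := \bm{u} - \bm{u}^j$ and $\bm{r}^j := \bm{B}\bm{K}\bm{e}^j$, the recursion $\bm{e}^{j+1} = (\bm{I} - \bm{B}\bm{K})\bm{e}^j$ gives the identity $\bm{r}^j = \bm{e}^j - \bm{e}^{j+1}$. Combining this with the discrete analogue of \cref{cor2}, namely $\|\bm{e}^{j+1}\|_a \leq \vartheta \|\bm{e}^j\|_a$, the triangle inequality and its reverse form yield the two-sided bound
\[
(1-\vartheta)\|\bm{e}^j\|_a \;\leq\; \|\bm{r}^j\|_a \;\leq\; (1+\vartheta)\|\bm{e}^j\|_a.
\]
Iterating the error contraction $j$ times and inserting the upper bound at step $j$ together with the lower bound at step $0$ produces
\[
\|\bm{r}^j\|_a \;\leq\; (1+\vartheta)\vartheta^j\|\bm{e}^0\|_a \;\leq\; \vartheta^j\,\frac{1+\vartheta}{1-\vartheta}\,\|\bm{r}^0\|_a.
\]

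Second, I would transfer this bound to the $\|\cdot\|_b$ norm using $\beta_{\min}\|\cdot\|_b \leq \|\cdot\|_a \leq \beta_{\max}\|\cdot\|_b$, which introduces the factor $\beta_{\max}/\beta_{\min}$ and yields the analogous Richardson-residual bound measured in $\|\cdot\|_b$. Finally, since the Richardson iterate starts from $\bm{u}^0 = \bm{v}^0$, the preceding proposition gives $\|\bm{B}\bm{K}(\bm{u}-\bm{v}^j)\|_b \leq \|\bm{B}\bm{K}(\bm{u}-\bm{u}^j)\|_b = \|\bm{r}^j\|_b$, and chaining the estimates produces exactly the claimed inequality.

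The one place requiring care is the transition from the functional setting of \cref{cor2} to the matrix setting of the theorem. I would argue this by observing that the algebraic operators $\bm{K}_i^{-1}\bm{R}_i\bm{K}$ and $\bm{K}_S^{-1}\bm{R}_S\bm{K}$ are precisely the matrix representations of the $a$-orthogonal projections $\pi_i$ and $\pi_s$ with respect to the energy vector norm $\|\bm{v}\|_a^2 = \bm{v}^T\bm{K}\bm{v}$, so the discrete iterates inherit the functional contraction verbatim. Beyond this identification, the remainder of the proof is a routine chain of triangle inequalities and norm-equivalence estimates, and I expect no further technical obstacles.
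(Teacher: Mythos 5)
Your argument is correct, and it reconstructs the standard proof of this result: the paper itself gives no argument here but simply defers to \cite[Thm.~3.9]{Strehlow2024}, and the chain you describe (energy-norm contraction of the Richardson error, the two-sided bound $(1-\vartheta)\|\bm{e}^j\|_a \le \|\bm{r}^j\|_a \le (1+\vartheta)\|\bm{e}^j\|_a$ via the identity $\bm{r}^j = \bm{e}^j - \bm{e}^{j+1}$, the norm equivalence giving the factor $\beta_{\mathrm{max}}/\beta_{\mathrm{min}}$, and finally the GMRES minimization property from the preceding proposition) is exactly how that cited theorem is established. No gaps.
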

\begin{proof}
See~\cite[Thm.~3.9]{Strehlow2024} for a proof.
\end{proof}

\section{Numerical experiments}\label{sec:numericalexperiments}

\begin{figure}
	\includegraphics[height = .4\linewidth, width=.5\linewidth]{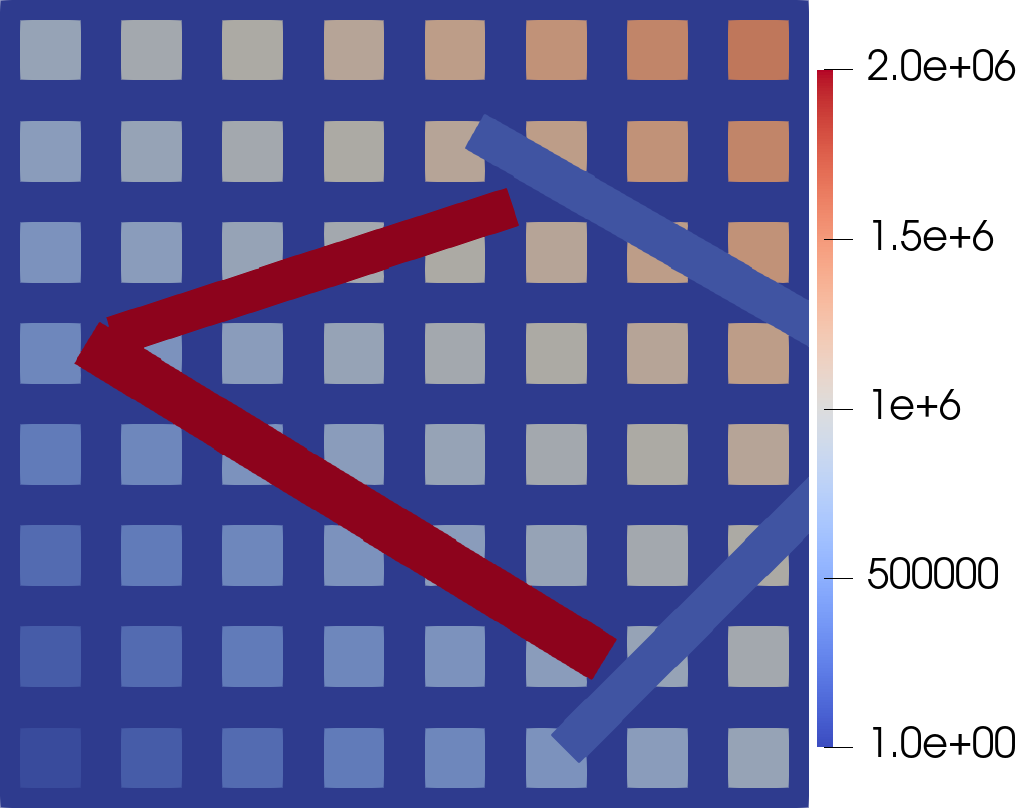}\hspace{.75cm}
	\includegraphics[height = .4\linewidth,width=.4\linewidth]{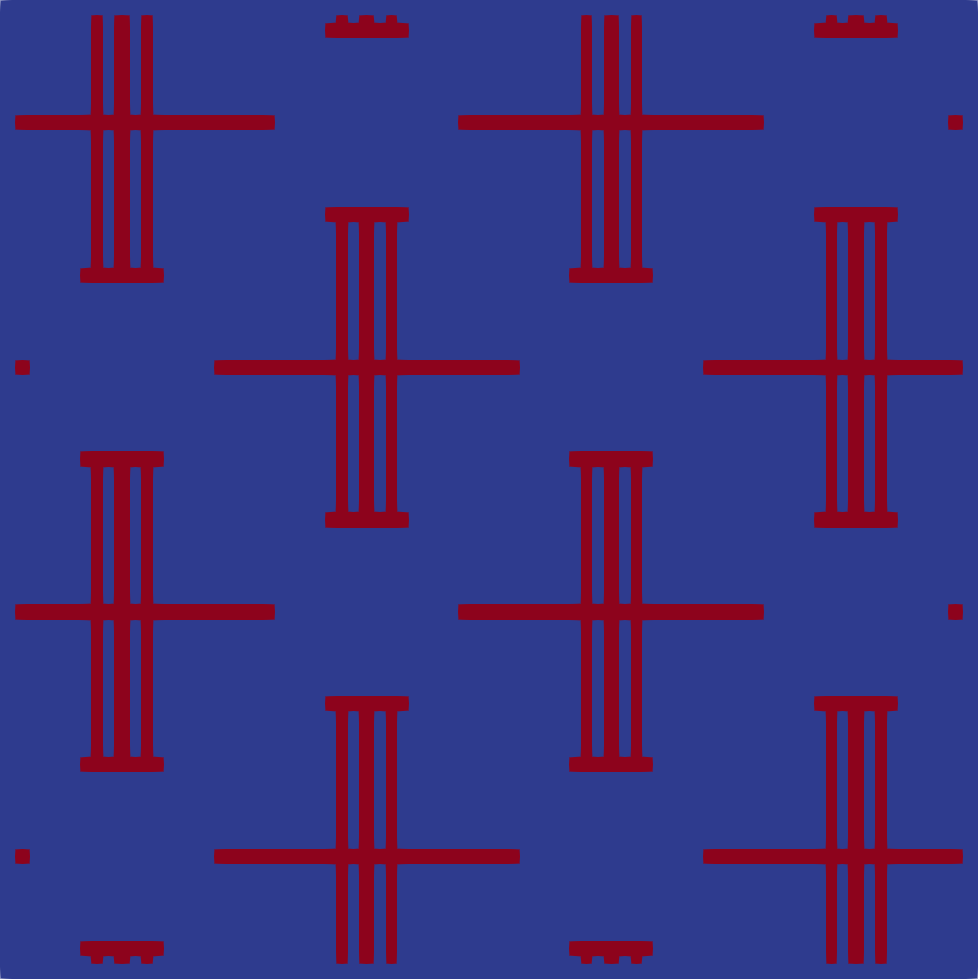}
	\caption{Coefficients $\bm{A}$ used in the numerical experiments.}
	\label{fig:coefficients}
\end{figure}

In this section, we describe the implementation of the proposed method on rings—referred to as MS-GFEM-R—and provide numerical experiments that support the theoretical results and offer a comparison with the original MS-GFEM, in which the local eigenproblems are defined on the entire subdomains. All experiments use a $\mathcal Q^1$-finite element method on uniform Cartesian meshes for the fine-scale discretization. We always choose the fine mesh size small enough to resolve the coefficient. 
The computational domain is divided into $M$ non-overlapping brick-like subdomains. Each subdomain is extended by two layers of fine elements in each space direction to create the overlapping subdomains $\{\omi\}_{i = 1}^m$ of~$\Omega$. 
Additionally, each subdomain $\omi$ is extended by $\ell$ layers of fine elements in each space direction to obtain the oversampling domain $\osi$. 
{\cAdd 
The oversampling rings $\ringosi$ are defined by extending $\supp (\chi_i)$ by $\ell$ layers of fine elements in each space direction.
The functions $\eta_i$, used to define $\chi_i^R$ are implemented as cut-off functions that transition linearly from $1$ to $0$ over one layer of fine elements
and we set $\ringi = \supp (\chi_i^R)$.}
In the following, for simplicity, the number of basis functions per subdomain is chosen to be the same for all subdomains, i.e., ${n = n_1=\dots=n_M}$. 
When used as a multiscale method, we use the relative energy error as a benchmark criterion, defined~as
\begin{equation}
	\mathbf{err} := \frac{\Vert u_{h} - u_{h}^{G}\Vert_{a}}{\Vert u_{h}\Vert_{a}},
\end{equation}
where we denote the fine-scale reference solution of the PDE by $u_h$, cf.~\cref{eq:PDEweak}, and the fine-scale discretized GFEM solution by $u_h^G$, cf.~\cref{eq:weakformulationgfem}.
When the methods are used as preconditioners, we use the number of iterations required to achieve a relative residual of $10^{-8}$ as a benchmark criterion. If the iteration count exceeds the maximal number of iterations of 1000, it is formally set to infinity.

The numerical experiments presented below can be reproduced using the code available at \url{https://github.com/ChristianAlber/msgfemr}.

\subsection{Nearly exponential convergence}

\begin{figure}
	\includegraphics[width=.45\linewidth]{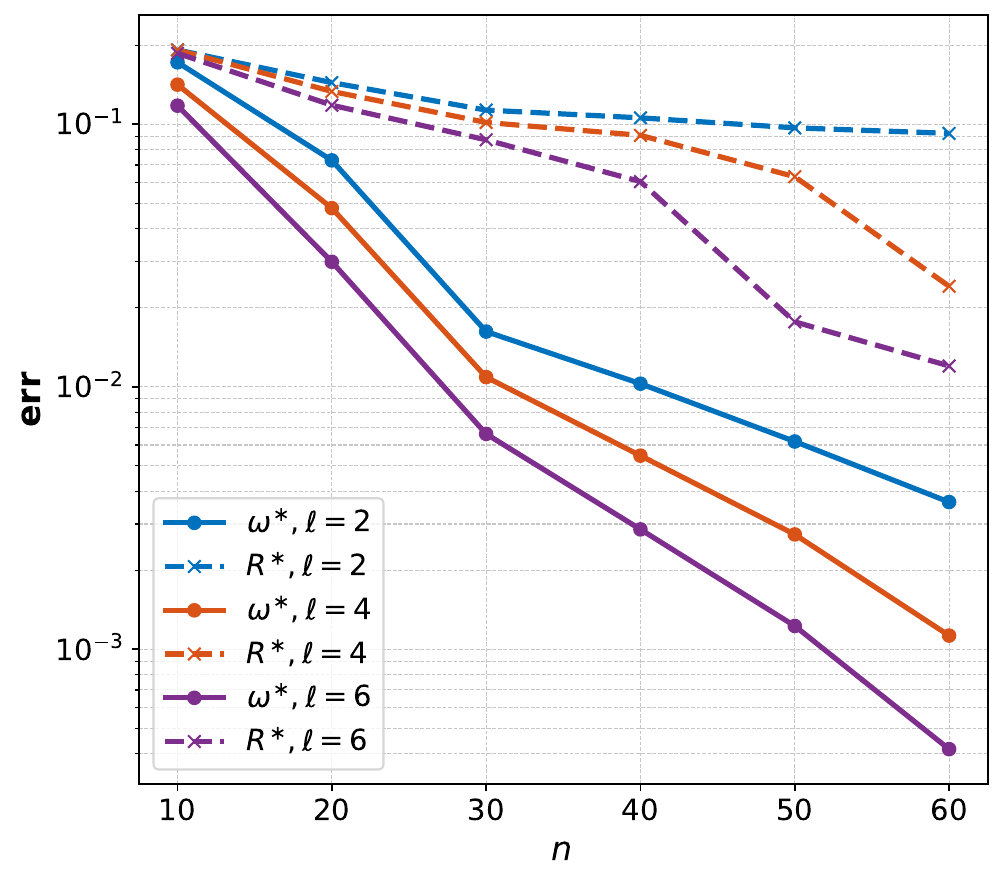}\hspace{.5cm}
	\includegraphics[width=.45\linewidth]{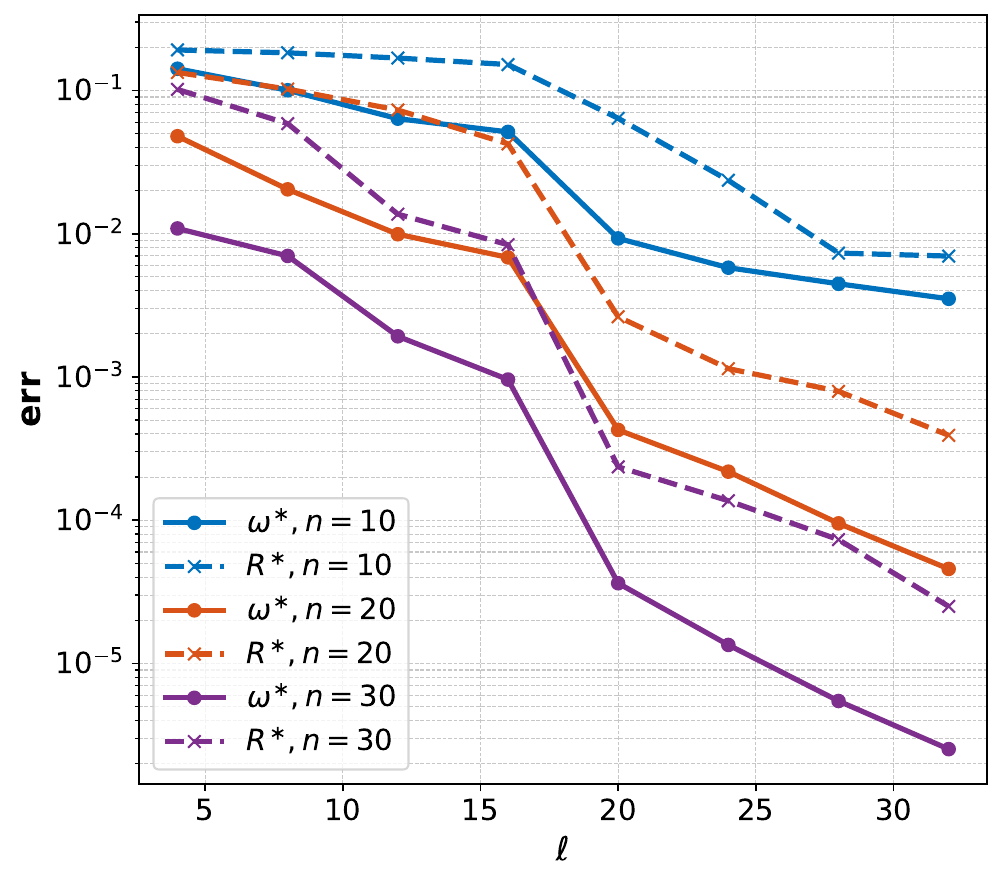}
	\caption{Errors with MS-GFEM and MS-GFEM-R for the skyscraper coefficient in \cref{fig:coefficients}(left) as a function of the number of local eigenfunctions $n$ (left) and as a function of the number of oversampling layers $\ell$ (right). The error curve of the MS-GFEM is marked with $\os$, while that of MS-GFEM-R is marked with $\ringos$.}
	\label{fig:subexponential_convergence}
\end{figure}

The first numerical experiment considers the domain $\Omega=(0,1)^2$, the so-called skyscraper coefficient $\bm A$ from \cite{ma2022error} shown in \cref{fig:coefficients}~(left), which has a contrast of $2 \times 10^6$. Furthermore, we consider the source term $f \equiv 1$ and homogeneous Dirichlet boundary conditions are imposed on~$\partial \Omega$. 
A Cartesian mesh with 800 elements in each space dimension is used for the fine-scale discretization. The number of subdomains is chosen to be $M=64$, which means that we have 8 subdomains in each space direction.  

\cref{fig:subexponential_convergence} (left) illustrates the (nearly-)exponential convergence of MS-GFEM and MS-GFEM-R,
 as the number of local eigenfunctions is increased, when used as multiscale methods.
 This numerically supports the assertions of \cref{thm:errorestimate,cor:apriorierrorstimate}. 
 One observes that roughly twice as many local eigenfunctions are needed to 
 achieve the same accuracy with MS-GFEM-R as with MS-GFEM. The heuristic explanation for this observation is that 
 (after a possible preasymptotic regime) the dimension of the space of (discrete) operator-harmonic functions on the ring 
 is about twice that of the (discrete) operator-harmonic space on the whole oversampling domain.  
 This is because an (discrete) operator-harmonic function is uniquely defined by its boundary values, 
 and the ring has about twice as many fine-scale degrees of freedom on the boundary than the original oversampling domain. 
Furthermore, in \cref{fig:subexponential_convergence} (right), one observes the (nearly-)exponential convergence of the MS-GFEM as the number of oversampling layers is increased. We have not explicitly proved this result, but such a result is known from \cite{Ma22} and the proof therein applies also to the present setting of rings.

\subsection{Convergence of preconditioned iteration}

\begin{figure}
	\includegraphics[width=\linewidth]{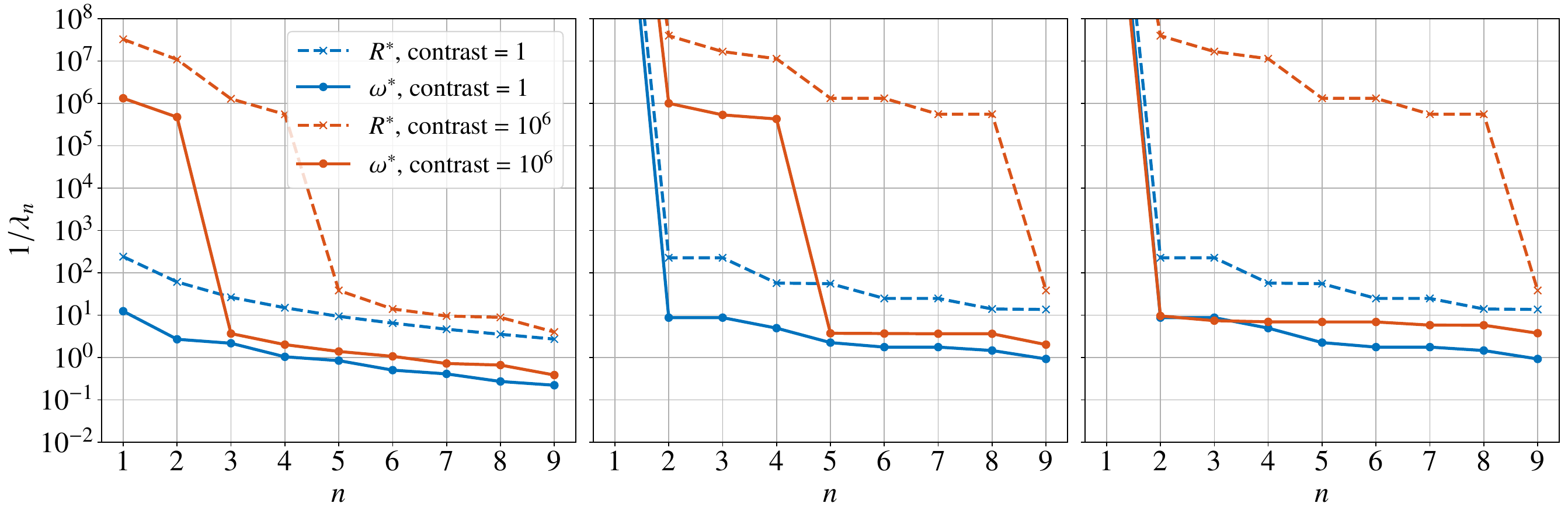}\\[2ex]
	\hspace{.4cm}
	\includegraphics[width = 0.3\linewidth]{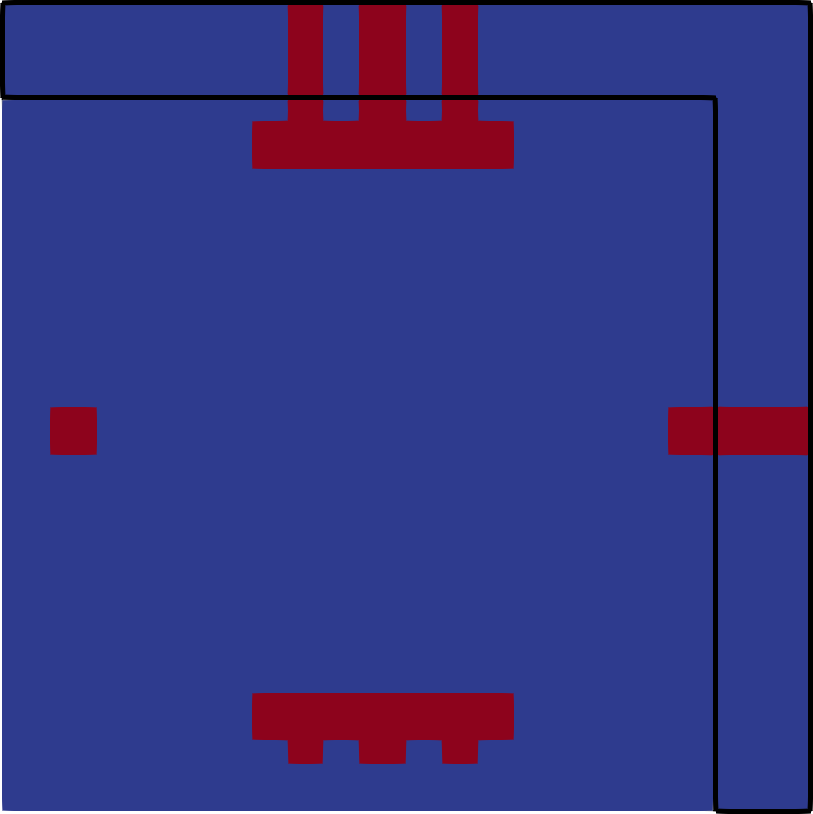}
	\hspace{.1cm}
	\includegraphics[width = 0.3\linewidth]{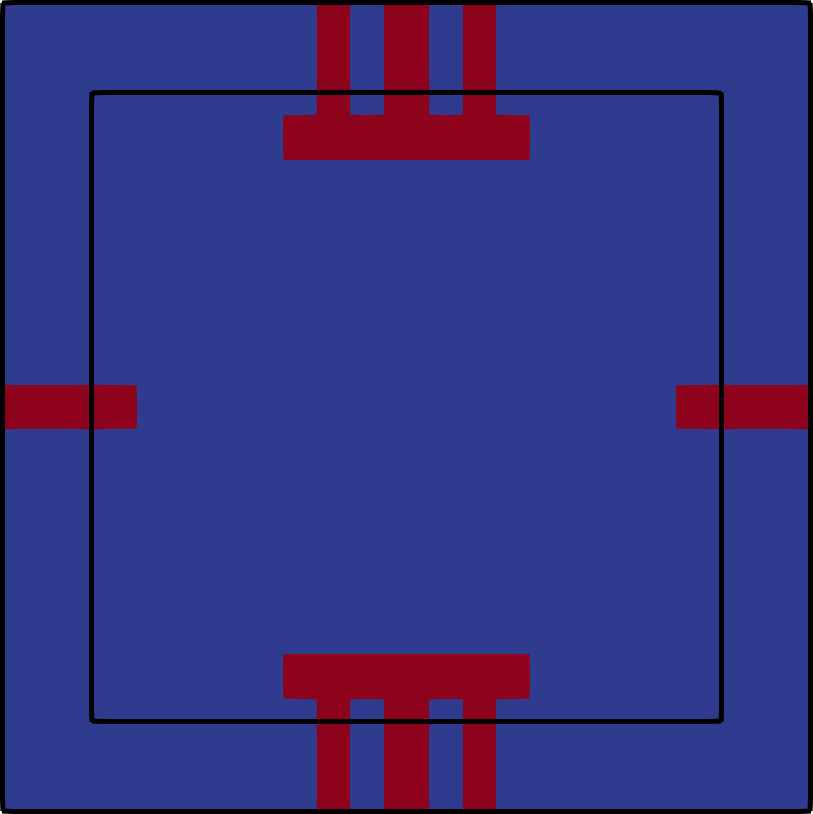}
	\hspace{.1cm}
	\includegraphics[width = 0.3\linewidth]{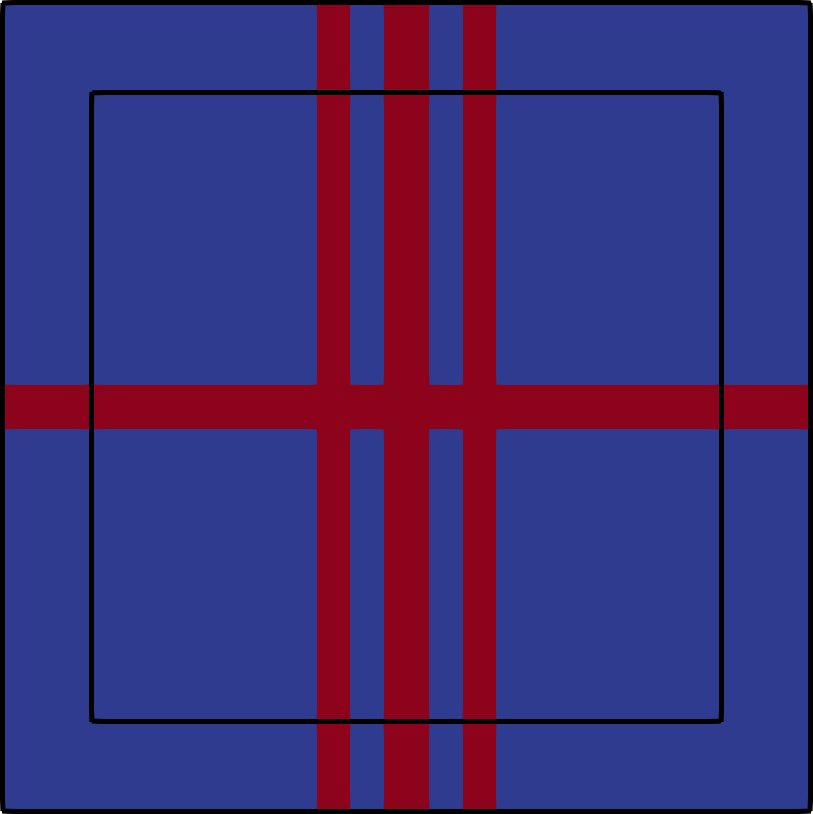}
	\caption{Eigenvalues (top row) of MS-GFEM and \mbox{MS-GFEM-R} for the channelized coefficient in \cref{fig:coefficients} (right)
	for three representative subdomains (bottom row).}
	\label{fig:channel_eigenvalues}
\end{figure}

\begin{table}
	\begin{subtable}{\linewidth}
		\begin{tabularx}{\textwidth}{l|XXXXXXXXXX}
\toprule
$\text{contrast} \setminus n$ & 1 & 2 & 3 & 4 & 5 & 6 & 7 & 8 & 9 & 10 \\
\midrule
$10^0$ & 90 & 75 & 37 & 25 & 23 & 21 & 18 & 13 & 13 & 11 \\
$10^3$ & $\infty$ & $\infty$ & 406 & 46 & 41 & 40 & 36 & 28 & 26 & 26 \\
$10^6$ & $\infty$ & $\infty$ & $\infty$ & 44 & 36 & 35 & 33 & 28 & 26 & 27 \\
\bottomrule
\end{tabularx}

	\end{subtable}\vspace{3ex}
	\begin{subtable}{\linewidth}
		\begin{tabularx}{\textwidth}{l|XXXXXXXXXX}
\toprule
$\text{contrast} \setminus n$ & 1 & 2 & 3 & 4 & 5 & 6 & 7 & 8 & 9 & 10 \\
\midrule
$10^0$ & 89 & 75 & 37 & 38 & 23 & 20 & 18 & 18 & 14 & 14 \\
$10^3$ & $\infty$ & $\infty$ & 418 & 51 & 45 & 44 & 43 & 41 & 38 & 34 \\
$10^6$ & $\infty$ & $\infty$ & $\infty$ & $\infty$ & $\infty$ & $\infty$ & 372 & 42 & 34 & 34 \\
\bottomrule
\end{tabularx}

	\end{subtable}
	\caption{Iteration numbers for different choices of contrast for the MS-GFEM (top) and the MS-GFEM-R (bottom) 
	for the channelized coefficient in \cref{fig:coefficients} (right) with preconditioned Richardson iteration.}
	\label{tab:channel_iteration_numbers_richardson}
\end{table}

The second numerical experiment considers the domain $\Omega=(0,1)^2$, again with source term $f=1$
and homogeneous Dirichlet boundary conditions. A Cartesian mesh with 256 elements in each space dimension is used for the fine-scale discretization. 
The number of layers to form the oversampling domains is set to $\ell = 2$. We use $M = 16$ subdomains ($4$ in each space direction) and 
the coefficient is chosen as shown in \cref{fig:coefficients} (right). 
This coefficient is considered for three different values of the contrast, 
namely $1$ in the blue region and $10^j$ for $j \in \{0,3,6\}$ in the red region. Here $j = 0$ corresponds to the case of a globally constant coefficient. 

First, we consider the reciprocals of the eigenvalues of the local eigenvalue problems in \cref{eq:evpharmonicfullincludingconstant} and \cref{eq:evpharmonicringincludingconstant} of MS-GFEM and MS-GFEM-R, respectively, 
for three representative subdomains, shown in \cref{fig:channel_eigenvalues} (top row). 
The bottom row of \cref{fig:channel_eigenvalues} shows the corresponding coefficients on the oversampling domains $\omega^*$ and the oversampling rings
$\ringos$ are indicated by the black lines. The first subdomain (left) is located at the bottom left corner of the domain.
For high contrast, there are two and four large eigenvalues of MS-GFEM and MS-GFEM-R, respectively. 
For MS-GFEM this corresponds to the two connected high conductivity regions intersecting the overlapping region. 
MS-GFEM-R does not capture that the three vertical channels are connected, which is why there are four large eigenvalues. 
Similar observations can be made for the second and third subdomain. 
Observe that for these two subdomains the spectra of MS-GFEM-R are the same, since the coefficients on $R^*$ 
are the same. However, the spectra of MS-GFEM differ, because there are 
four and one connected high conductivity regions in the second and third subdomain, respectively.

Next, we consider the case of a preconditioned Richardson-type iteration in \cref{tab:channel_iteration_numbers_richardson}. For the MS-GFEM-R, using fewer than 8 basis functions per subdomain, and for the MS-GFEM, fewer than 4, we observe high iteration numbers or no convergence at all in the highest contrast setup.
This is to be expected for this particular choice of coefficient and is in agreement with the observations about the eigenvalues in \cref{fig:channel_eigenvalues}. 
Therefore, for this particular example, 8 basis functions are needed for the proposed method to capture the channels of the coefficient. 
Similar observations have been made for other ring- or slab-based spectral coarse spaces, see, e.g., the numerical experiments in \cite{Heinlein2019}. 

\subsection{Computational savings}
In the third numerical experiment, we illustrate the computational savings achieved by localizing the eigencomputations to rings. 
We consider a three-dimensional setup and consider only the coarse space construction on one representative subdomain $\omega$. To build the subdomain $\omega$, we start with a cube 
with $m$ elements in each space direction and extend this cube by one layer of fine elements.
 The corresponding ring domain is denoted by $R$. To construct the oversampling domains $\os$ and $\ringos$ we add $\ell$ oversampling layers to~$\om$ and~$\ring$, respectively. 
 {\color{black}
In \cref{fig:computational_savings} (left), the runtimes  (averaged over ten repetitions) for computing the first five eigenfunctions of~\cref{eq:evpharmonicfullincludingconstant} on $\omega^*$ are compared with those of~\cref{eq:evpharmonicringincludingconstant} on $\ringos$, as $m$ is increased. 
} To solve the eigenproblems numerically, we use the sparse eigensolver {\cAdd eigs in scipy, which is a wrapper to ARPACK}.
One observes that the runtime for the eigenproblem on the ring is significantly lower. For example, considering a subdomain $\om$ for  $m=25$ and minimal oversampling $\ell=1$, we have a speedup factor of about $8$ for the eigenproblem on the ring compared to the eigenproblem on the whole subdomain. {\color{black} For $m = 25$ and $\ell = 3$, the largest choice of the oversampling parameter, we still have a speedup factor of two. Note that, also for larger values of $\ell$, one expects that the gap between the runtime on the ring and the whole subdomain grows as $m$ is increased.}

There are two reasons for these computational advantages when considering rings: First, the eigenproblems on rings have fewer degrees of freedom. Second and more importantly, the saddle-point matrices that must be factorized when solving eigenproblems on rings have a better sparsity pattern than the corresponding matrices on the whole subdomains. More specifically, when computing on thin rings, the fill-in experienced by the sparse direct solver SuperLU, cf.~\cite{demmel1999superlu}, which is used within {\cAdd eigs}, is more comparable to that of a two-dimensional problem than to that of the present three-dimensional problem. 
 The improved fill-in can be observed in \cref{fig:computational_savings} (right), where we compare, for both methods, the average number of nonzero entries per row of the LU-factors of the saddle-point matrix in the eigenproblem. One observes that for rings that are thin compared to the size of the oversampling domain, the number of non-zero entries per row is significantly lower for the ring-based method.

\begin{figure}
	\includegraphics[width=.45\linewidth]{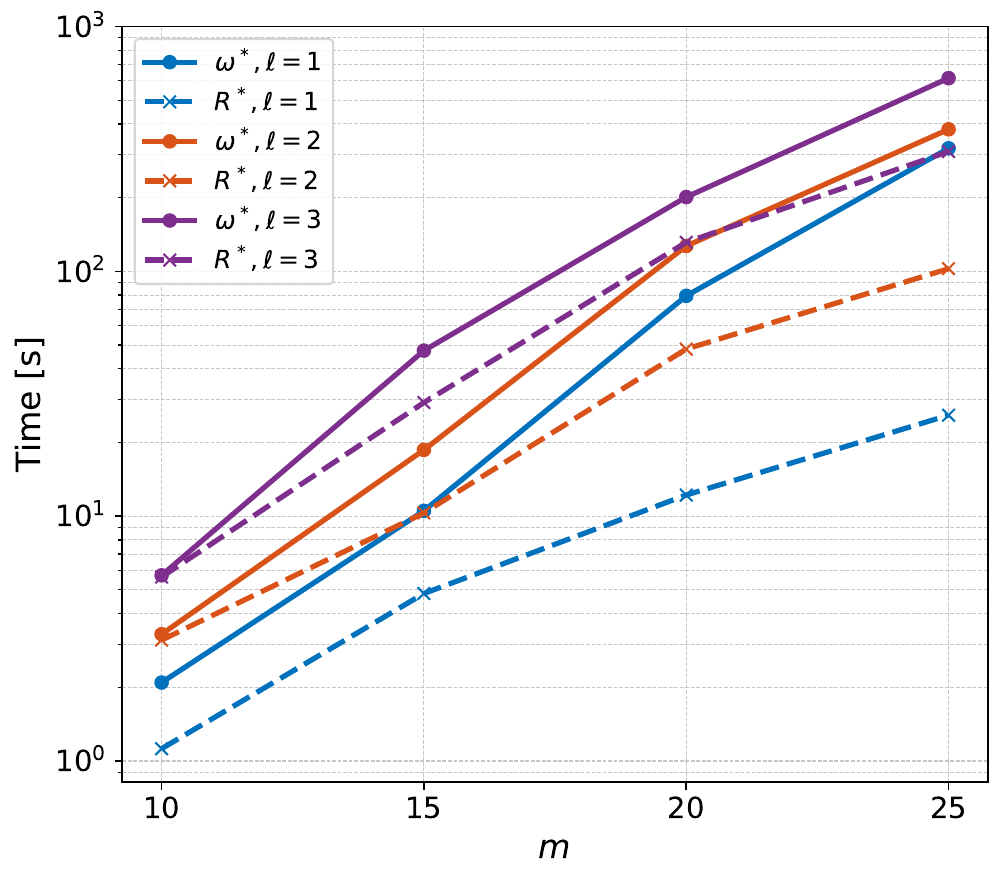}
	\hspace{.5cm}
	\includegraphics[width=.45\linewidth]{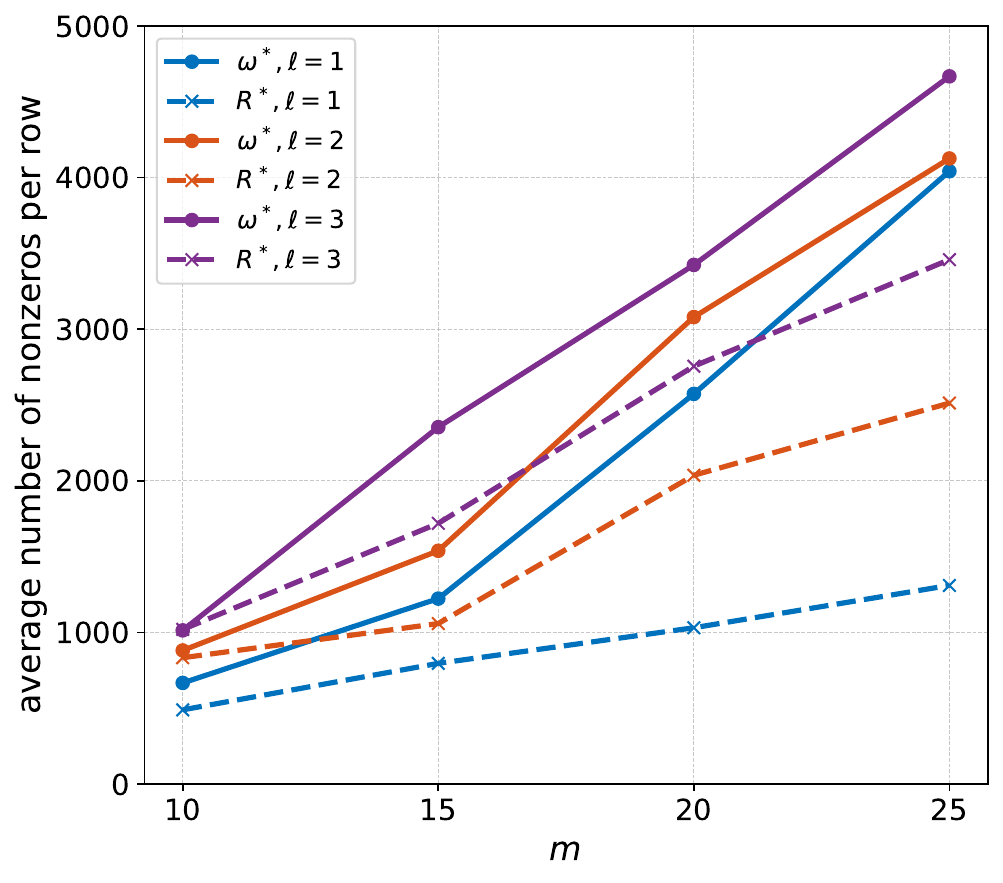}
	\caption{Computational times for eigenproblems (left) and average number of nonzero entries of LU-factors per row (right) for the MS-GFEM and the MS-GFEM-R as a function of the number~$m$ of fine-scale elements of the subdomains in each space direction.  The curves of the MS-GFEM and the MS-GFEM-R are marked with $\os$ and $\ringos$, respectively.}
	\label{fig:computational_savings}
\end{figure}

\section{Conclusion}

In this paper we have proposed a variant of the MS-GFEM where the spectral computations are performed only on rings around the boundary of the local subdomains.  This is in contrast to the classical MS-GFEM where these computations are performed on the whole subdomains. We have proved the (nearly-)exponential convergence of the proposed method in the number of local ansatz functions. Furthermore, the method can be used as a preconditioner where the approximation error of the multiscale method appears as the convergence rate. Therefore, the convergence result for the multiscale approximation implies a convergence result for the preconditioned iteration.
In practice, we have observed that localizing the spectral computations to thin rings leads to significant computational gains. This is because, compared to the classical MS-GFEM, the problem size of the local eigenproblems is reduced and one can benefit from a more favorable sparsity pattern of the underlying matrices. More specifically, for thin rings, the sparsity pattern of the matrices in $d$ dimensions is comparable to that of a problem of dimension~$d-1$, resulting in less fill-in for sparse direct solvers and ultimately faster spectral computations, as confirmed by our numerical experiments. Moreover, when used as a preconditioner, the number of iterations of the proposed variant (except for some tailor-made examples) is comparable to that of the classical MS-GFEM for the same coarse space dimension. This underscores the potential of the proposed MS-GFEM variant, and in the future we aim to provide a high-performance implementation of the method and examine it for problems beyond the elliptic model problem. 

\section*{Acknowledgments}
M.~Hauck acknowledges funding from the Deutsche Forschungsgemeinschaft\linebreak  (DFG, German Research Foundation) -- Project-ID 258734477 -- SFB 1173. 
The authors would like to thank Chupeng Ma (Great Bay University) for his helpful insights on the decay proof of the Kolmogorov $n$-width and Andreas Rupp (Saarland University) for fruitful discussions on the trace inequality on rings. The authors would also like to thank Alexander Heinlein (TU Delft) for his valuable remarks on spectral coarse spaces and their coefficient robustness.
\appendix

\bibliographystyle{alpha}
\bibliography{bib}
\end{document}